\documentclass[a4paper]{cas-sc}

\usepackage[authoryear,longnamesfirst]{natbib}
\setcitestyle{comma,numbers,square}
\def\tsc#1{\csdef{#1}{\textsc{\lowercase{#1}}\xspace}}
\tsc{WGM}
\tsc{QE}
\tsc{EP}
\tsc{PMS}
\tsc{BEC}
\tsc{DE}

\begin{document}
\let\WriteBookmarks\relax
\def\floatpagepagefraction{1}
\def\textpagefraction{.001}
\setcitestyle{square,numbers}


\shorttitle{The Cheeger constant of curved tubes in real space forms}    

\shortauthors{Petr Vlachopulos} 

\title[mode = title]{The Cheeger constant of curved tubes in real space forms}                  

\author[1]{Petr Vlachopulos}[orcid=0009-0001-6410-7275]
\cormark[1]
\ead{pvlachopulos@gmail.com}
\cortext[1]{Corresponding author}
\ead[url]{https://www.researchgate.net/profile/Petr-Vlachopulos?ev=hdr_xprf}

\affiliation[1]{organization={Department of Mathematics and Statistics, Masaryk University, Faculty of Science},
                addressline={Kotlarska 2}, 
                city={Brno},
                postcode={611 37}, 
                country={Czech Republic}}

\begin{abstract}
Motivated by the geometric properties of curved tubes $T\left(P,a\right)$ defined by closed curves $P$, we compute the \textit{Cheeger constant} $h\left(T\left(P,a\right)\right)$ in the real space forms of arbitrary dimensions with constant sectional curvature. The structure and properties of the system of Fermi coordinates allows us to parametrize the \textit{curved tube} and straightforwardly compute the upper bound of $h\left(T\left(P,a\right)\right)$ using the exact formulas for the area and volume of $T\left(P,a\right)$. Next, we derive the lower bound by combining the geometry of tubes with a calibration-type argument. The key idea is to describe the tube using geodesic spheres moving along the underlying curve $P$, which provides a natural outward direction and makes the estimate geometrically transparent. This allows us to compute the lower bound via the divergence theorem. Finally, for the class of unbounded curved tubes in noncompact real space forms, we also compute the Cheeger constant and prove that there is no finite-volume Cheeger set.

\end{abstract}

\begin{keywords}
Cheeger constant \sep Curved tubes \sep Real space forms \sep Fermi coordinates \sep Moving-sphere parametrization \sep Divergence \sep Vector field
\end{keywords}

\maketitle

\section{Introduction}

Let $M$ be a smooth, complete, orientable Riemannian manifold with a smooth boundary $\partial M$. We define the \textit{Cheeger constant} of $M$ as the number
\begin{equation}\label{eq:1}
\begin{aligned}
h\left(M\right)=\inf \left\{\frac{\left| \partial N\right| }{\left| N\right|},N\subset M\right\}, 
\end{aligned}
\end{equation}
where the infimum is taken over all compact smooth Riemannian submanifolds $N$ of $M$ with smooth boundary $\partial N$ and with the same dimension $n$. We denote by $\left| N\right|$ the $n$-dimensional Riemannian volume of $N$ and by $\left| \partial N\right|$ we denote the $(n-1)$-dimensional Riemannian area. Any submanifold $Q\subset M$ for which ${\frac{\left| \partial Q\right| }{\left| Q\right| }=h\left(M\right)}$ is called a \textit{Cheeger set} of $M$. In other words, any minimizer of (\ref{eq:1}) (if it exists) can be considered a \textit{Cheeger set} of $M$ and is denoted as $\mathscr{C}_{M}$. Hence, ${\frac{\left| \partial \mathscr{C}_{M}\right| }{\left| \mathscr{C}_{M}\right| }=h(M)}$. 
The \textit{Cheeger problem} consists of computation of the isoperimetric quotient $h(M)$ and the characterization of the \textit{Cheeger set} $\mathscr{C}_{M}$. If $\mathscr{C}_{M}=M$, we call $M$ \textit{self-Cheeger}. For more details about the history, fundamental formulations, and interesting properties of the Cheeger problem, the interested reader may see the following articles \cite{leonardi2015overview, maggi2012sets, singmaster1978constrained}. Note that the \textit{Cheeger constant} can also be defined for manifolds without boundary. See the original paper \cite{cheeger1970lower}.

There is a deep connection between the \textit{Cheeger problem} and the eigenvalue problem for the $p$-Laplacian when $p=2$, which is manifested through the following inequality 
\begin{equation*}
{\lambda _1(\Omega )\geq \frac{h^2(\Omega )}{4}}
\end{equation*}
where $\Omega \subset \mathbb{R}^N$ is a bounded domain, $\lambda _1(\Omega )$ represents the first eigenvalue of the $2$-Laplacian considered under Dirichlet boundary conditions. Such an inequality was first established in 1969 by American topologist Jeff Cheeger in his foundational paper \cite{cheeger1970lower}.

We also refer to the papers \cite{parini2011introduction, singmaster1978constrained, kawohl2003isoperimetric}, where the above-mentioned problem is studied in detail. One can see that the relation between the mentioned eigenvalue problem and the isoperimetric problem forms a backbone of current active research areas. 

Surprisingly, there are few situations where it is possible to compute the \textit{Cheeger constant} explicitly. One of the famous examples is the $n$-dimensional ball. Consider the $n$-dimensional Euclidean ball $B_a = {\left\{\left.x\in \mathbb{R}^n\right|\lVert x \lVert <a \right\}}$, where $a>0$. Then its \textit{Cheeger constant} is given as
\begin{equation*}
h(B_a)=\frac{n}{a}  
\end{equation*}
and its \textit{Cheeger set} is the $n$-ball itself, $C_{B_a}=B_a$.
Many results have been derived for the planar domains, i.e., $n=2$. For example, we can explicitly compute the \textit{Cheeger constant} of any convex polygon, see \cite{kawohl2006characterization}. In fact, for an arbitrary convex polygon, it is possible to derive a constructive algorithm for finding the \textit{Cheeger constant} and the \textit{Cheeger set} of such a polygon \cite{kawohl2006characterization}. Some recent results deal with the \textit{Cheeger sets} for rotationally symmetric planar convex bodies, where the \textit{Cheeger set} of such domains touches all the edges of the domain and is also rotationally symmetric \cite{canete2022cheeger}. Moreover, precise estimate for the \textit{Cheeger constant} and explicit characterization of the \textit{Cheeger set} for the so-called \textit{curved strips} were given in \cite{krejcirik2011cheeger}. Here, as in the case of the $n$-ball, the \textit{curved strips} are also \textit{self-Cheeger}. Remarkably, the \textit{Cheeger constant} of \textit{curved strips} corresponds to the \textit{Cheeger constant} of $1$-ball, independently of their shapes. A similar result resolving the \textit{Cheeger constants} of the \textit{curved tubes} in $\mathbb{R}^n$ was proved in \cite{krejcirik2019cheeger}. Despite the plethora of explicit results in low dimensions, much less is known about the \textit{Cheeger problem} in higher dimensions.

The aim of this paper is to provide an extension of our previous result \cite{krejcirik2019cheeger} to all space forms and to prove that the \textit{curved tubes} are the \textit{Cheeger sets} of themselves. We can define the \textit{curved tube} as a geometric shape wrapped around some central curve, while having a fixed radius. We adopt the following definition
\newdefinition{definition}{Definition}\label{def:1}
\begin{definition}
\textbf {(Curved tube)} \cite[pp.~32-33]{gray2003tubes} \textit{Let $M$ be a Riemannian manifold. Let $P$ be a smooth embedded closed curve. For an arbitrary $x\in M$, take the function $d_g(x) := \operatorname{dist}\left(x,P\right)$ as the geodesic distance between $x$ and $P$. Then the curved tube determined by $P$ is given by}
\begin{equation*}
T(P,a) := \{x\in M \vert \operatorname{dist}\left(x,P\right) \leq a\}.
\end{equation*}
\end{definition}
Our main result is stated below.

\newtheorem{thm}{Theorem}\label{thm:1}
\begin{thm}
\label{1}
\textit{Let $X$ be a locally symmetric oriented space with constant sectional curvature $K^X=const.$ Let $P$ be a smooth embedded closed curve in $X$ and $a>0$. Let $T(P,a)$ be the $n$-dimensional curved tube determined by $P$ in $X$. Assume that $a>0$ is small enough to ensure that $T(P,a)$ does not self-intersect. Then the \textit{Cheeger constant} of $T(P,a)$ equals}
\begin{equation*} 
      h\left(T\left(P,a\right)\right)=\begin{cases}
\vspace{0.25cm}
\left(n-1\right)\sqrt{K^X}\operatorname{cotg}\left(\sqrt{K^X}a\right), & K^X>0 \\
\vspace{0.25cm}
\hspace*{2.5cm}  \frac{n-1}{a}, & K^X=0 \\
\left(n-1\right)\sqrt{\vert K^X\vert}\operatorname{cotgh}\left(\sqrt{\vert K^X\vert}a\right), & K^X<0
    \end{cases}  
\end{equation*}
\textit{and $T\left(P,a\right)=\mathscr{C}_{T\left(P,a\right)}$}. 
\end{thm}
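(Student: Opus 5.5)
We write $s_K(t)$ for $\sin(\sqrt{K}t)/\sqrt{K}$, $t$, or $\sinh(\sqrt{|K|}t)/\sqrt{|K|}$, according to the sign of $K=K^X$. Then the claimed value is $h=(n-1)s_K'(a)/s_K(a)$, which is the mean curvature of a geodesic sphere of radius $a$ in the $(n-1)$-dimensional space form of curvature $K$. The proof has two halves: an upper bound from the trial set $N=T(P,a)$, and a matching lower bound by a calibration argument with a divergence-type vector field.

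\emph{Upper bound.} We parametrize $T(P,a)$ in Fermi coordinates $(s,r,\theta)$ along $P$. Here $s$ is the arclength on $P$ (of length $L$), $r\in[0,a]$ is the distance to $P$, and $\theta\in S^{n-2}$ is a direction in the normal space, taken with respect to a parallel (Tang/relatively parallel) normal frame. The volume element is $\left(s_K'(r)-\kappa(s)\,s_K(r)\cos\varphi(s,\theta)\right)s_K(r)^{n-2}\,dr\,d\theta\,ds$, where $\kappa\cos\varphi$ is the normal curvature component of $P$ in the direction $\theta$. Because $\int_{S^{n-2}}\cos\varphi\,d\theta=0$, the curvature term integrates to zero. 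This gives
$$|T(P,a)|=L\,|S^{n-2}|\int_0^a s_K'(r)s_K(r)^{n-2}\,dr=\frac{L\,|S^{n-2}|\,s_K(a)^{n-1}}{n-1},$$
and likewise $|\partial T(P,a)|=L\,|S^{n-2}|\,s_K'(a)s_K(a)^{n-2}$. These are Weyl/Gray-type tube formulas. Their quotient is exactly the claimed value, so $h\le(n-1)s_K'(a)/s_K(a)$.

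\emph{Lower bound.} We use the moving-sphere description: $T(P,a)\setminus P$ is foliated by the spheres $\{r=\rho\}$ inside the normal slices. On it we take the vector field
$$V=\frac{s_K(a)}{s_K'(a)}\cdot\frac{\nabla r}{\,\cdot\,}\quad\text{normalized so that } |V|\le 1 \text{ and } \operatorname{div}V\ge h.$$
The natural candidate is $V=F(r)\nabla r$ with $|F|\le1$. Its divergence is $F'(r)+F(r)\Delta r$, and
$$\Delta r=\frac{(n-2)s_K'}{s_K}+\frac{\partial_r J}{J},\qquad J=s_K'-\kappa s_K\cos\varphi.$$
The key step is choosing $F$ so that $\operatorname{div}V\ge h$ everywhere; this is what I expect to be the main obstacle. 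The term $\kappa\cos\varphi$ has no sign, so the bound must be pointwise, not averaged. I would try $F(r)=\Phi(r)/\Phi(a)$ with $\Phi(r)=\int_0^r s_K'(t)s_K(t)^{n-2}\,dt\big/ s_K(r)^{n-2}$ times a slice-dependent factor. More robustly, I would replace the radial field by the field $V=\frac{1}{h\,J s_K^{n-2}}\,\partial_r\!\left(\cdot\right)$ obtained by solving $\operatorname{div}V=h$ on each normal fibre, namely
$$V=\frac{h\int_0^r J(s,t,\theta)\,s_K(t)^{n-2}\,dt}{J(s,r,\theta)\,s_K(r)^{n-2}}\,\nabla r.$$
This field satisfies $\operatorname{div}V=h$ exactly, since the $s$- and $\theta$-components vanish and $\nabla r$ is orthogonal to the $s$-direction in Fermi coordinates. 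It remains to check that $|V|\le1$. Since $J$ is affine in $\kappa\cos\varphi$, the ratio is
$$\frac{\int_0^r (s_K'-c\,s_K)s_K^{n-2}}{(s_K'(r)-c\,s_K(r))s_K(r)^{n-2}},\qquad c=\kappa\cos\varphi,$$
with $1-c\,s_K/s_K'>0$ by the non-self-intersection assumption. One has to verify that this ratio is at most $s_K(a)/((n-1)s_K'(a))$ for all $r\le a$ and all admissible $c$; this monotonicity in $r$ and $c$ is the delicate computation. If it fails for $c<0$, I would fall back on the constant-$F$ radial field and bound the curvature term using $\partial_r J/J$ together with tube-radius control.

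Given such a $V$ with $|V|\le1$ and $\operatorname{div}V\ge h$, we conclude as follows. For any admissible $N\subset T(P,a)$, the divergence theorem gives $h|N|\le\int_N\operatorname{div}V=\int_{\partial N}V\cdot\nu\le|\partial N|$. The singularity of $V$ along $P$ is harmless, since $V$ is bounded there and $P$ has codimension at least $2$. Hence $h(T(P,a))\ge h$. Together with the upper bound this shows that equality is attained at $N=T(P,a)$, so $T(P,a)$ is self-Cheeger.
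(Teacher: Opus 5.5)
Your upper bound is correct and is the paper's argument: Gray's tube formula, with the $\kappa\cos\varphi$ term averaging to zero over $S^{n-2}$. The lower bound, however, has a genuine gap, and none of the fields you propose closes it.

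Your exact-divergence field violates $|V|\le 1$ as soon as $P$ is curved. It fails for $c=\kappa\cos\varphi>0$, the side toward the curvature vector of $P$, not for $c<0$. Already in $\mathbb{R}^n$, where $s_K(t)=t$, your formula at $r=a$ gives
\[
h\,\phi(a)=\frac{1-\frac{n-1}{n}\,ca}{1-ca}>1\qquad\text{for } 0<ca<1 .
\]
For instance, this equals $3/2$ on the inner boundary of a planar annulus with core radius $2a$.

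For $n\ge 3$ this failure is structural, not a matter of a delicate estimate. The Fermi metric has no $dr\,ds$ or $dr\,d\theta$ cross terms. Hence any $V=\phi\nabla r$ with $|\phi|\le1$ has zero flux through the lateral faces of the wedges $\{s\in I,\ \theta\in\Omega,\ \epsilon<r<a\}$, and flux $O(s_K(\epsilon)^{n-2})\to 0$ through $\{r=\epsilon\}$. Therefore $\operatorname{div}V\ge h$ forces $\phi(a)J(a)s_K(a)^{n-2}\ge h\int_0^a J s_K^{n-2}\,dt$. So $\phi(a)$ is at least the value of your field, which exceeds $1$ on the inner side.

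In other words, such a field cannot carry flux across $P$, but a calibration must. Indeed, $h|T(P,a)|=|\partial T(P,a)|$ forces $V=\nu$ on all of $\partial T(P,a)$, while the inner half of each cross-section has more volume per unit boundary area than the outer half. Your fallback $V=\nabla r$ is worse still: using $s_K'^2+Ks_K^2=1$, one finds at $r=a$ that $\Delta r-h=-1/\bigl(s_K(a)J(a)\bigr)<0$ for every $c$.

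The missing idea is a unit field with a longitudinal component. The paper writes each $x$ in the open tube uniquely as $x=\exp_p(a\xi)$ with $p=\gamma(s_-)$, $|\xi|=1$ and $\langle\xi,\dot\gamma(s_-)\rangle>0$. Here $s_-$ is the entering endpoint of $A_x=\{s:\ d(x,\gamma(s))<a\}$. This set is a single arc with transversal endpoints: any critical point of $s\mapsto d(x,\gamma(s))^2$ at level $\le a^2$, other than the nearest-point minimum, would give a second normal representation of $x$ and contradict injectivity of $\exp^{\perp}$ (Proposition 4).

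The paper then takes $V(x)$ to be the outward unit normal at $x$ of the geodesic sphere of radius $a$ about $p$. Along each hemisphere $\Sigma_p$, $V$ coincides with that sphere's normal, and $\langle\nabla_VV,V\rangle=0$. Hence $\operatorname{div}V=(n-1)s_K'(a)/s_K(a)$ exactly at every point, with $|V|=1$ and $V\to\nu$ at the equator, as the equality case demands.

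Note also that what you call the moving-sphere description, the foliation by the normal spheres $\{r=\rho\}$, is actually the Fermi foliation. The paper deliberately abandons that foliation for the lower bound.
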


In the spherical case \(K^X>0\), the assumption that the radius \(a>0\) is sufficiently small for \(T\left(P,a\right)\) to form a regular embedded tubular neighborhood implies that \(a<\frac{\pi}{2\sqrt{K^X}}\). Consequently, the Cheeger constant remains positive \(\sqrt{K^X}\operatorname{cotg}\left(\sqrt{K^X}a\right)>0\).
Therefore, the positivity of the spherical expression follows automatically from the geometric admissibility of the tube and does not constitute an additional restriction on \(a\). Depending on the geometry of \(P\), the admissible radius may, in fact, be smaller. 

An important and interesting feature of this result is that the \textit{Cheeger constant} does not depend on the shape of the defining curve. If we interpret this geometric invariance in the context of the solution to the eigenvalue problem for $1$-Laplacian \cite{singmaster1978constrained}, then we have the \textit{Cheeger constant} as the first eigenvalue of the $1$-Laplacian. Thus, within this class of admissible curved tubes, the first eigenvalue is independent of the shape of the defining curve \(P\), although the geometry of the domain and the corresponding eigenfunctions may still depend on \(P\).

In order to prove theorem \ref{thm:1}, it was not possible to directly generalize the methods from the Euclidean case \cite{krejcirik2011cheeger}. Therefore, we had to develop new techniques.

Also, notice that the compactness of $P$ ensures that we can always find $a>0$ that is small enough, as in theorem \ref{thm:1}. The condition that $T(P,a)$ does not intersect itself is essential for our methods. We will analyze these technical issues in detail in the next section.

\newpage
\section{Preliminaries}

In the following paragraphs, all considerations will be situated in space forms with constant sectional curvature. Let $X$ be a smooth, complete Riemannian manifold with constant sectional curvature $K^X$. Thus $X$ corresponds to $\mathbb{R}^n$ with $K^X=0$, $\mathbb{S}^n$ with $K^X>0$, and the hyperbolic case $\mathbb{H}^{n}$ with $K^X<0$, up to isomorphisms. 

Our first steps reflect the observation that we can consider the \textit{curved tube} in $X$ as the tubular neighborhood of the defining curve $P$. Fortunately, all relevant theory and technical details can be found in \cite{gray2003tubes}. For the convenience of the reader, we shall summarize the essential tools in the following paragraphs.

Assume that $P$ is a smooth embedded closed curve in the space $X$. Let \(T_pP\) be the tangent space to \(P\) at \(p\in P\) and let us write $\nu_pP:=\left(T_pP\right)^{\perp}=\{v\in T_pX\hspace{0.1cm}\vert\hspace{0.1cm}\langle v,w\rangle=0,\hspace{0.1cm}\forall w\in T_pP\}\subset T_pX$ for the orthogonal complement of the tangent space \(T_pP\). Then the normal bundle $\nu P$ is
\begin{equation*}
    {\nu P}:=\bigsqcup_{p\in P}\nu_pP=  \{(p,v)\hspace{0.1cm}\vert\hspace{0.1cm} p\in P, v\in \nu_pP\}.
\end{equation*}

The following exponential map technique is well-known in the field of differential topology. Our assumption that $T(P,a)$ does not intersect with itself is easily characterized in terms of the exponential map $exp^{\perp}$ of the normal bundle $\nu P$, which is defined as $exp^{\perp} (p,v)=exp_p (v)$, for $(p,v)\in \nu P$, where $exp_p (v)$ is the restriction to the normal bundle. The following proposition ensures the existence of the exponential map in the neighborhood of the zero section of the vector bundle $\nu P$. Moreover, we can then identify $P$ with the zero section of $\nu P$ and consider it as a submanifold of $\nu P$.

\newtheorem{prop}{Proposition}
\begin{prop} \cite[p.~32]{gray2003tubes} \label{prop.1} \textit{Let $P$ be a smooth embedded closed curve in $X$. Then, the exponential map $exp^{\perp}: \nu P \rightarrow X$ restricts to a smooth diffeomorphism of a neighborhood of $P \in \nu P$ onto a neighborhood of $P \in X$}.
\end{prop}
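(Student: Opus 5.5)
The plan is to deduce Proposition \ref{prop.1} from the inverse function theorem at points of the zero section, together with a compactness argument that upgrades local injectivity to injectivity on a uniform neighborhood.

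\emph{Step 1: the differential along the zero section.} Fix $p\in P$ and identify $P$ with the zero section of $\nu P$. At $(p,0)$ there is a canonical splitting $T_{(p,0)}\nu P\cong T_pP\oplus\nu_pP$. Along the zero section $exp^{\perp}(q,0)=q$, so $d\,exp^{\perp}_{(p,0)}$ acts as the identity on $T_pP$. In the fibre direction, $t\mapsto exp_p(tv)$ has initial velocity $v$, so the differential also acts as the identity on $\nu_pP$. Since $T_pX=T_pP\oplus\nu_pP$, the differential $d\,exp^{\perp}_{(p,0)}$ is a linear isomorphism. Smoothness of $exp^{\perp}$ near the zero section follows from the smooth dependence of geodesics on initial data. (Here $X$ is complete, so $exp^{\perp}$ is in fact defined on all of $\nu P$.) The inverse function theorem then yields a neighborhood $U_p$ of $(p,0)$ on which $exp^{\perp}$ is a diffeomorphism onto an open set.

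\emph{Step 2: a uniform tube.} Using compactness of $P$, I choose $\varepsilon>0$ such that $exp^{\perp}$ is a local diffeomorphism on $\nu^{\varepsilon}P=\{(p,v):|v|<\varepsilon\}$. This works because $P$ is covered by finitely many $U_p$, and a Lebesgue-number argument gives the uniform radius.

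\emph{Step 3: global injectivity.} This is the main obstacle. Suppose injectivity fails for every $\varepsilon=1/k$. Then there are pairs $(p_k,v_k)\neq(q_k,w_k)$ with $|v_k|,|w_k|<1/k$ and $exp^{\perp}(p_k,v_k)=exp^{\perp}(q_k,w_k)$. By compactness of $P$, pass to subsequences with $p_k\to p$ and $q_k\to q$. Then $d(p_k,q_k)\le |v_k|+|w_k|\to0$, so $p=q$. For large $k$ both points lie in $U_p$, where $exp^{\perp}$ is injective, which is a contradiction. Hence $exp^{\perp}$ is an injective local diffeomorphism on $\nu^{\varepsilon}P$. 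Its image is open, and the inverse is smooth, so it is a diffeomorphism onto a neighborhood of $P$ in $X$.

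The only delicate point is this compactness step, which is where embeddedness and closedness of $P$ are used. Embeddedness ensures that the subspace topology on $P$ agrees with the intrinsic one, so that convergence in $X$ transfers to convergence in $\nu P$.
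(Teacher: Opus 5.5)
Your proof is correct. The paper does not prove this proposition itself but quotes it from Gray's \emph{Tubes}, and your argument is exactly the standard tubular-neighbourhood proof behind that citation: the inverse function theorem at the zero section, a uniform radius by compactness, and the sequential argument giving injectivity on $\{|v|<\varepsilon\}$. One small correction to your closing remark: in Step 3 the subsequences already converge in the intrinsic topology of the compact curve $P$, so what you actually use is only injectivity of $P\to X$ (to conclude $p=q$ as points of $P$), and embeddedness is automatic for a compact injectively immersed curve.
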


Proposition \ref{prop.1} claims that the function $\rho=\left\| v\right\|$ is given by $\rho=d_p \circ exp^{\perp}$, where $d_p\left(x\right):=\operatorname{dist}\left(x,P\right)$ is the distance function from $x$ to $P$.   

Let us denote $O_P$ the largest neighborhood of the zero section of $\nu P$ such that the exponential map $exp^{\perp}: O_P \rightarrow exp^{\perp} (O_P)$ is a diffeomorphism for that neighborhood.
 
We may reformulate the definition of the \textit{curved tube} as follows 
\begin{definition} \cite[p.33]{gray2003tubes}\label{def:2} 
\textit{Let $P$ be a smooth embedded closed curve in $X$, then the curved tube is defined by} 
\begin{equation*}
T(P,a) := \bigcup_{\forall p\in P} \{exp_p(v)\hspace{0.1cm} \vert\hspace{0.1cm} v\in\nu_pP, \lVert v \rVert \leq a \}.
\end{equation*}
\end{definition} 

We shall also introduce the following definition of the so-called tubular hypersurface.

\begin{definition}\label{def:3}
\textbf {(Tubular hypersurface)} \cite[p.33]{gray2003tubes}  \textit{Let $X$ be a space form with constant sectional curvature $K^X$. Let $P$ be a smooth, closed, embedded curve. Next, let $0\leq r \leq a$. Then a tubular hypersurface about $T(P,a)$ is given by} 
\begin{equation*}
{\delta P_r:=\{x\in T(P,a)\hspace{0.1cm}\vert\hspace{0.1cm} dist(x,P)=r\}}.
\end{equation*}
\end{definition}
It is straightforward from this definition that for $0< r\leq a$ the tubular hypersurfaces form a natural regular foliation of $T(P,a)\setminus P$.

Having introduced the two principal objects used in theorem \ref{thm:1}, we now seek suitable parametrizations for them.  

The choice of the so-called Fermi coordinate system \cite{gray2003tubes}, will be useful for computing the upper bound of $T(P,a)$. Additionally, it is worth mentioning that precisely due to the natural foliation of $T(P,a)\setminus P$ by the tubular hypersurfaces, we can choose the right coordinate system that corresponds exactly to the Fermi coordinates, generalizing the normal coordinates. 

In order to define a system of Fermi coordinates for an arbitrary submanifold $P$, it is necessary to have an arbitrary system of coordinates, say $\left(y_1,...,y_q\right)$ defined in a neighborhood $U\subset P$ of $p\in P$ with orthonormal sections $A_{q+1},...,A_n$ of the restriction of the normal bundle $\nu P$ to $U$. The following definition is generally given for any topologically embedded submanifold $P$ of a Riemannian manifold $M$ \cite{gray2003tubes}. In our case, we will further work with its restriction to $P$ being a smooth embedded closed curve and $M$ being a space form $X$ with constant sectional curvature. For our purposes, we shall restrict our considerations to the case of $q=1$.

\begin{definition} \cite[p.17]{gray2003tubes} 
\textbf{(Fermi coordinates)}\label{def:4}
\textit{Given the orthonormal sections $A_{2},...,A_n$ of the normal bundle $\nu P$, the Fermi coordinates $\left(x_1,...,x_n\right)$ of $T(P,a)$ at $p\in P$ with respect to a given coordinate system $y_1$ on $U\subset P$, parametrizing $P$ by length, are defined as}
\begin{equation} \label{eq:2}
{x_1\left(\operatorname{exp^{\perp}}\left(p,\sum_{i=2}^{n}t_iA_i\left(p\right)\right)\right)=y_1\left(p\right)}
\end{equation}
\begin{equation}\label{eq:3}
{x_{\beta}\left(\operatorname{exp^{\perp}}\left(p,\sum_{i=2}^{n}t_iA_i\left(p\right)\right)\right)=t_{\beta}},
\end{equation}
\end{definition}
\textit{for $p\in U$ and $\beta\in \{2,...,n\}$. Here, the numbers $t_{2},...,t_n$ are small enough that the sum $\sum_{i=2}^n t_iA_i\left(p\right)$ is in $O_P$.}

\vspace{0.1cm}
Recalling the fact that $\operatorname{exp}^{\perp}$ is a diffeomorphism on $O_P$, we can say that (\ref{eq:2}) and (\ref{eq:3}) determine a coordinate system in the neighborhood of $p$.

The next proposition is an obvious consequence of the previous definition. 
\begin{prop} \cite[p.18]{gray2003tubes}\label{prop:2} 
\textit{Let the curve $P$ be parameterized by length. Then, assuming that $\left(x_1,...,x_n\right)$ is a system of Fermi coordinates at $p\in P$, the restrictions of the coordinate vector fields $\frac{\partial}{\partial x_1},...,\frac{\partial}{\partial x_n}$ to $P$ are orthonormal.}
\end{prop}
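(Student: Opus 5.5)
The plan is to use the definitions directly. Take Fermi coordinates $(x_1,\dots,x_n)$ at $p\in P$, where $x_1=y_1$ is arc length along $P$ and the $x_\beta$ are the coefficients $t_\beta$ of the orthonormal normal sections $A_\beta$. I will compute the metric coefficients $g_{ij}=\langle \partial/\partial x_i,\partial/\partial x_j\rangle$ at points of $P$ itself, that is, at $t_2=\dots=t_n=0$.

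\emph{The normal directions.} For $\beta\ge 2$, the curve $t\mapsto \exp^{\perp}\bigl(p,tA_\beta(p)\bigr)=\exp_p\bigl(tA_\beta(p)\bigr)$ is the geodesic through $p$ with initial velocity $A_\beta(p)$, since $d(\exp_p)_0=\mathrm{id}$. Its velocity at $t=0$ is therefore $\partial/\partial x_\beta|_p=A_\beta(p)$. More generally, $d\exp^{\perp}$ restricted to the fibre direction at the zero section is the identity on $\nu_pP$. Hence $\partial/\partial x_\beta|_p=A_\beta(p)$ for every $\beta\ge 2$, and these vectors are orthonormal because the sections $A_\beta$ were chosen orthonormal.

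\emph{The tangential direction.} Setting all $t_\beta=0$ in (\ref{eq:2})--(\ref{eq:3}) gives $\exp^{\perp}(p,0)=p$. So along $P$ the map $x_1\mapsto(x_1,0,\dots,0)$ is just the parametrization of $P$ by $y_1$. Consequently $\partial/\partial x_1|_p$ is the velocity of $P$ at $p$. Since $P$ is parametrized by length, this vector has unit length and lies in $T_pP$.

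\emph{Conclusion.} The vector $\partial/\partial x_1|_p$ lies in $T_pP$, while each $\partial/\partial x_\beta|_p$ lies in $\nu_pP=(T_pP)^\perp$. Therefore $\langle\partial_1,\partial_\beta\rangle=0$. Combined with the two normalizations above, this shows that the restrictions of $\partial/\partial x_1,\dots,\partial/\partial x_n$ to $P$ are orthonormal.

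The only point requiring care is that $\exp^{\perp}$ is a diffeomorphism near the zero section (Proposition~\ref{prop.1}). This guarantees that these coordinate fields are well defined and that the differential computations are legitimate. Beyond that there is no real obstacle.
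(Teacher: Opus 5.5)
Your proof is correct and follows the paper's route: the paper offers no argument beyond calling the statement an obvious consequence of the definition of Fermi coordinates, with a pointer to Gray. Your computation is exactly that unpacking: $\partial/\partial x_1|_P$ is the unit velocity of the arc-length parametrization because $\exp^{\perp}(p,0)=p$, $\partial/\partial x_\beta|_P=A_\beta$ because $d(\exp_p)_0=\mathrm{id}$, and $T_pP\perp\nu_pP$ kills the cross terms.
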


Fermi coordinates serve as a measuring tool for the geometry of a Riemannian manifold in a close neighborhood of the submanifold (closed curve) $P$. The choice of the coordinate system on $P$ is of no importance. If required, it can be chosen as the system of normal coordinates. Therefore, we are only interested in how $x_1$ and $x_{\beta}$ vary along geodesics that are normal to $P$.

Let \(\nabla\) denote the covariant derivative and assume \(N\) to be the outward radial unit normal to the tubular hypersurface \(\delta P_r\). Then the curvature operator associated with \(N\) is the ambient \((1,1)\)-tensor field
\begin{equation}
R_N:T\left(\operatorname{exp}^{\perp}\left(O_P\right)\setminus P\right)\longrightarrow T\left(\operatorname{exp}^{\perp}\left(O_P\right)\setminus P\right)
\end{equation}
defined by \(\left(R_N\right)_x\left(V\right)=R_x\left(V,N_x\right)N_x\), for all \(x\in X\) and  \(V\in\Xi\left(\operatorname{exp}^{\perp}\left
(O_P\right)\setminus P\right)\), with $\Xi\left(\operatorname{exp}^{\perp}\left(O_P\right)\setminus P\right)$ denoting the set of all vector fields on $\operatorname{exp}^{\perp}\left(O_P\right)\setminus P$.

Now, we introduce the notion of the shape operator.
\begin{definition}\cite[p.33]{gray2003tubes}\label{def:5}
\textit{The shape operator}
\begin{equation}\label{eq:4}
\mathscr{S}:\Xi\left(\operatorname{exp}^{\perp}\left(O_P\right)\setminus P\right)\longrightarrow\Xi\left(\operatorname{exp}^{\perp}\left(O_P\right)\setminus P\right),
\end{equation}
\textit{is defined as}
\begin{equation}\label{eq:5}
\mathscr{S}\left(V\right)=-\nabla_VN,
\end{equation}
\textit{for $V\in\Xi\left(\operatorname{exp}^{\perp}\left(O_P\right)\setminus P\right)$, where $N$ is the outward unit normal vector field.}
\end{definition}

The covariant derivative of the shape operator satisfies an important equation that is essential for computing the second fundamental form of the tubular hypersurface. Consequently, this plays its role within the formula for the volume of $T\left(P,a\right)$.

We state the following lemma.
\newtheorem{lemma}{Lemma}
\begin{lemma}\cite[p.34]{gray2003tubes}\label{lem:1}
\textit{On $\operatorname{exp}^{\perp}\left(O_P\right)-P$, the shape operator satisfies the following equation}
\begin{equation}\label{eq:6}
\nabla_N\left(\mathscr{S}\right)=\mathscr{S}^2+R_N,
\end{equation}
\end{lemma}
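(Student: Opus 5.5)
The plan is to prove the identity pointwise on $\operatorname{exp}^{\perp}(O_P)\setminus P$. First I will identify the outward unit normal $N$ with the gradient of the distance function $\rho=d_p\circ\operatorname{exp}^{\perp}$. Then I will evaluate $\nabla_N(\mathscr{S})$ on an arbitrary vector field $V$ using the definition of the curvature tensor. I use the convention
\[
R(X,Y)=\nabla_X\nabla_Y-\nabla_Y\nabla_X-\nabla_{[X,Y]},
\]
so that $R_N(V)=R(V,N)N=-R(N,V)N$. If the paper (following Gray) uses the opposite sign, the final identity will have to be read with that sign convention.

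The first and most delicate step is to show that $N$ is a smooth unit vector field on $\operatorname{exp}^{\perp}(O_P)\setminus P$ satisfying $\nabla_N N=0$. By Proposition~\ref{prop.1}, $\operatorname{exp}^{\perp}$ is a diffeomorphism on $O_P$. Hence $\rho(\operatorname{exp}^{\perp}(p,v))=\lVert v\rVert$ is smooth away from the zero section. Through every point $x=\operatorname{exp}_p(v)$ with $v\neq 0$ passes the unit-speed normal geodesic $t\mapsto \operatorname{exp}_p(tv/\lVert v\rVert)$. Its velocity at $x$ is exactly the outward radial unit normal $N_x$ to the tubular hypersurface $\delta P_\rho$, by the generalized Gauss lemma for normal exponential maps. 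Thus $N=\operatorname{grad}\rho$, and the integral curves of $N$ are geodesics, which gives $\nabla_N N=0$. As a by-product, $\langle \nabla_V N,N\rangle=\tfrac12 V\langle N,N\rangle=0$, so $\mathscr{S}$ maps into the tangent spaces of the hypersurfaces $\delta P_r$; this is not needed for the identity but fixes its interpretation. I expect this step to be the main obstacle, because it is the only place where the global structure (non-self-intersection, $O_P$) enters. Everything afterwards is a purely local tensor computation.

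Next, for arbitrary $V$, I expand the covariant derivative of the $(1,1)$-tensor $\mathscr{S}$:
\[
(\nabla_N\mathscr{S})(V)=\nabla_N(\mathscr{S}V)-\mathscr{S}(\nabla_N V)=-\nabla_N\nabla_V N+\nabla_{\nabla_N V}N .
\]
I then rewrite $\nabla_N\nabla_V N$ using the curvature tensor. Using $\nabla_N N=0$ and the torsion-free identity $[N,V]=\nabla_N V-\nabla_V N$, I get
\[
\nabla_N\nabla_V N=R(N,V)N+\nabla_{\nabla_N V}N-\nabla_{\nabla_V N}N .
\]
Substituting this, the terms $\nabla_{\nabla_N V}N$ cancel, leaving
\[
(\nabla_N\mathscr{S})(V)=-R(N,V)N+\nabla_{\nabla_V N}N .
\]
Finally, the first term is $R(V,N)N=R_N(V)$. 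For the second, $\nabla_{\nabla_V N}N=-\mathscr{S}(\nabla_V N)=\mathscr{S}(\mathscr{S}V)=\mathscr{S}^2(V)$.

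Since $V$ is arbitrary and both sides are tensorial in $V$, this yields $\nabla_N(\mathscr{S})=\mathscr{S}^2+R_N$ on $\operatorname{exp}^{\perp}(O_P)\setminus P$. Constancy of curvature is not needed for this lemma. It will only matter later, when $R_N$ is replaced by $K^X(\mathrm{Id}-\langle\cdot,N\rangle N)$ to solve the Riccati equation explicitly along normal geodesics.
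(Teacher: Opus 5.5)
Your proof is correct and is essentially the standard argument of the cited source (Gray), which the paper quotes without reproducing. It uses $\nabla_N N=0$ along the unit-speed normal geodesics, expands $(\nabla_N\mathscr{S})(V)$, and trades $\nabla_N\nabla_V N$ for a curvature term plus $\nabla_{\nabla_V N}N=\mathscr{S}^2V$. Your sign caveat is unnecessary: the paper's constant-curvature formula $R(X,Y)Z=K^X(\langle Z,Y\rangle X-\langle Z,X\rangle Y)$ corresponds exactly to your convention $R(X,Y)=\nabla_X\nabla_Y-\nabla_Y\nabla_X-\nabla_{[X,Y]}$, so $R_N(V)=R(V,N)N$ enters with the stated plus sign.
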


Specifically, this tells us that the Riccati equation \(\nabla_N \left(\mathscr{S}\right)=\mathscr{S}^2+R_N\) means pointwise \(\left(\nabla_N\left(\mathscr{S}\right)\right)\left(V\right)=\mathscr{S}^2\left(V\right)+R\left(V,N\right)N\).

\newpage
Now, for each \(0<r\leq a\), let
\begin{equation}
i_r:\delta P_r\longrightarrow X
\end{equation}
denote the inclusion of the tubular hypersurface into the ambient real space form. Recall that \(N\) is the outward radial unit vector field on \(\operatorname{exp}^{\perp}\left(O_P\right)\setminus P\), namely \(N=\nabla d_p\), where \(\tau\left(x\right):=\operatorname{dist}\left(x,P\right)\). Since \(\delta P_r\) is a level set of \(d_p\left(x\right)\), its tangent space is given by
\begin{equation}
T_x\delta P_r=\{v\in T_xX\hspace{0.1cm}\vert\hspace{0.1cm}\langle v,N_x\rangle=0\},
\end{equation}
where \(x\in\delta P_r\) and \(v\in T_x\delta P_r\). Thus \(N_x\) can be considered the unit normal to \(\delta P_r\) at \(x\).

Next, we want to check that \(\mathscr{S}\) could be restricted to \(\delta P_r\). Indeed, when \(V\) is tangent to \(\delta P_r\), we have 
\begin{equation}
\langle \mathscr{S}\left(V\right),N\rangle=-\langle\nabla_V N,N\rangle=-\frac{1}{2}V\langle N,N\rangle=0,
\end{equation}
because \(\vert N\vert=1\). Therefore, \(\mathscr{S}\left(V\right)\) is again tangent to \(\delta P_r\). Hence, the restriction of \(\mathscr{S}\) to \(T\delta P_r\) is well defined. 

Now, we define the map 
\begin{equation}
\mathscr{S}\left(r\right):T\delta P_r\longrightarrow T\delta P_r
\end{equation}
by
\newcommand{\restr}[2]{\ensuremath{\left.#1\right|_{#2}}}
\begin{equation}
\mathscr{S}\left(r\right)_x\left(v\right):=\mathscr{S}_x\left(v\right)=-\nabla_VN,
\end{equation}
where \(x\in\delta P_r\), \(v\in T_x\delta P_r\). Equivalently, we obtain the restriction \(\mathscr{S}\left(r\right)=\restr{\mathscr{S}}{T\delta P_r}\). This is precisely the so-called Weingarten map, or the shape operator of the tubular hypersurface \(\delta P_r\), with respect to the outward unit normal \(N\), as defined in \cite[p.34]{gray2003tubes}. With the sign convention used here, the corresponding second fundamental form is
\begin{equation}
\operatorname{II}_r\left(v,w\right)=\langle \mathscr{S}\left(r\right)v,w\rangle=-\langle\nabla_vN,w\rangle,\quad v,w\in T_x\delta P_r.
\end{equation}

Similarly, according to the previous considerations about \(R_N\), we can restrict it to an endomorphism of the tangent space of the tubular hypersurface \(T\delta P_r\). We define the map
\begin{equation}
R\left(r\right):T_x\delta P_r\longrightarrow T_x\delta P_r,
\end{equation}
by \(R\left(r\right)_x\left(v\right):=\left(R_N\right)_x\left(v\right)=R_x\left(v,N_x\right)N_x\), where \(x\in\delta P_r\), \(v\in T_x\delta P_r\). This restriction is well defined because for every \(v\in T_x\delta P_r\), we have \(\langle R_N\left(v\right),N\rangle=\langle R\left(v,N\right)N,N\rangle=R\left(v,N,N,N\right)=0\), where the last equality follows from the skew-symmetry of the curvature tensor in its last two entries. Thus, \(R_N\left(v\right)\) is orthogonal to \(N\), hence, it is tangent to the hypersurface \(\delta P_r\) whenever \(v\perp N\). 

In this sense, \(\mathscr{S}\left(r\right)\) and \(R\left(r\right)\) are the restrictions of the tensor fields \(\mathscr{S}\) and \(R_N\) to the tangent bundle of the level hypersurface \(\delta P_r\). 

According to these conventions, the Riccati equation from lemma \ref{lem:1} restricts along \(\delta P_r\) to the following hypersurface equation.

\begin{lemma}\cite[p.34]{gray2003tubes}\label{lem:2}
\textit{The shape operator of $\delta P_r$ satisfies}
\begin{equation}\label{eq:7}
\mathscr{S}^{\prime}\left(r\right)=\mathscr{S}^2\left(r\right)+R\left(r\right).
\end{equation}
\end{lemma}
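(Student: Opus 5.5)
We deduce the restricted Riccati equation of Lemma \ref{lem:2} directly from Lemma \ref{lem:1} by restricting both sides to tangent vectors of the level hypersurfaces. The key observation is that $N=\nabla d_p$ is a geodesic field on $\operatorname{exp}^{\perp}(O_P)\setminus P$. Its integral curves are the unit-speed normal geodesics $\gamma(r)=\operatorname{exp}_p(r u)$, $u\in\nu_pP$, $\lVert u\rVert=1$. Hence $\nabla_NN=0$, and the flow of $N$ for time $s$ maps $\delta P_r$ diffeomorphically onto $\delta P_{r+s}$ (Proposition \ref{prop.1}). Accordingly, we interpret $\mathscr{S}^{\prime}(r)$ as the covariant derivative along these normal geodesics of the family of endomorphisms $\mathscr{S}(r)$. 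Concretely, for a vector field $V(r)$ along $\gamma$ with $V(r)\in T_{\gamma(r)}\delta P_r$, we set
\begin{equation*}
\mathscr{S}^{\prime}(r)V=\nabla_N\big(\mathscr{S}(r)V\big)-\mathscr{S}(r)\big(\nabla_NV\big),
\end{equation*}
which is exactly $(\nabla_N\mathscr{S})(V)$ evaluated on tangent vectors.

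\textbf{Step 1: the normal bundle of the foliation is parallel along $\gamma$.} Take $V$ tangent to $\delta P_r$ along $\gamma$, so $\langle V,N\rangle=0$. Then
\begin{equation*}
N\langle V,N\rangle=\langle\nabla_NV,N\rangle+\langle V,\nabla_NN\rangle=\langle\nabla_NV,N\rangle .
\end{equation*}
Thus, if $V$ is chosen parallel, $\nabla_NV=0$, it stays orthogonal to $N$ along $\gamma$. Since $\mathscr{S}(N)=-\nabla_NN=0$ and $\mathscr{S}$ maps $T\delta P_r$ into itself (shown above), the decomposition $TX=T\delta P_r\oplus\mathbb{R}N$ is preserved by $\mathscr{S}$. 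Because $N$ and $T\delta P_r$ are both parallel along $\gamma$, it is also preserved by $\nabla_N\mathscr{S}$. Therefore $(\nabla_N\mathscr{S})|_{T\delta P_r}$ coincides with $\mathscr{S}^{\prime}(r)$.

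\textbf{Step 2: restrict the remaining terms.} The term $\mathscr{S}^2$ restricts to $\mathscr{S}(r)^2$, because $\mathscr{S}$ preserves $T\delta P_r$. By the computation $\langle R(v,N)N,N\rangle=0$ given before the lemma, $R_N$ restricts to $R(r)$. Applying Lemma \ref{lem:1} to $V\in T_x\delta P_r$ now gives equation (\ref{eq:7}) pointwise.

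\textbf{Where the care lies.} The main subtlety is making the meaning of $\mathscr{S}^{\prime}(r)$ precise, since the operators act on different tangent spaces $T\delta P_r$ for different $r$. I would handle this with parallel orthonormal frames $E_2,\dots,E_n$ along $\gamma$, chosen with $E_n=N$ where convenient. In such a frame $\mathscr{S}(r)$ becomes a matrix function of $r$, and $\mathscr{S}^{\prime}(r)$ is simply its ordinary derivative. Step 1 guarantees this frame adapts to the foliation. Lemma \ref{lem:1} itself is taken from \cite{gray2003tubes}. If one wanted it independently, one would differentiate $\mathscr{S}V=-\nabla_VN$ along $N$ and use $R(V,N)N=\nabla_V\nabla_NN-\nabla_N\nabla_VN-\nabla_{[V,N]}N$ together with $\nabla_NN=0$; this is routine.
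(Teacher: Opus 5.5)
Your proposal is correct and follows the paper's route. The paper also obtains Lemma \ref{lem:2} by restricting the ambient Riccati equation of Lemma \ref{lem:1} to $T\delta P_r$, using that $\mathscr{S}$ and $R_N$ map $T\delta P_r$ into itself and reading $\mathscr{S}^{\prime}(r)$ as $\nabla_N\mathscr{S}$ on tangent vectors; your check via $\nabla_NN=0$ that $\nabla_N\mathscr{S}$ also preserves $T\delta P_r$, and your parallel-frame interpretation, supply details the paper leaves implicit.
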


Here \(\mathscr{S}^{\prime}\left(r\right)\) means the covariant derivative of the family \(\mathscr{S}\left(r\right)\) in the radial direction \(N\). More precisely, if \(x=\operatorname{exp}_p\left(ru\right)\in\delta P_r\), and if \(v\left(r\right)\) is a vector field along the radial geodesic \(r\mapsto\operatorname{exp}_p\left(ru\right)\) with \(v\left(r\right)\in T_{\operatorname{exp}_p\left(ru\right)}\delta P_r\), then the term \(\mathscr{S}^{\prime}\left(r\right)v\left(r\right)\) is represented by \(\left(\nabla_N\mathscr{S}\right)\left(v\left(r\right)\right)\). Equivalently, we have \(\mathscr{S}^{\prime}\left(r\right)_x\left(v\right):=\left(\nabla_N\mathscr{S}\right)_x\left(v\right)\), with \(x\in\delta P_r\) and \(v\in T_x\delta P_r\). Thus, the equation (\ref{eq:7}) can be viewed as an equality of endomorphisms of \(T\delta P_r\). 

Henceforth, the longitudinal component of \(\mathscr{S}\left(r\right)\) admits a finite limit as \(r\longrightarrow 0\), determined by the second fundamental form of \(P\). However, this limit must be taken along a specific normal geodesic to make sense.

For a real space form of constant sectional curvature \(K^X\), the curvature tensor satisfies
\begin{equation}
R\left(X,Y\right)Z=K^X\left(\langle Z,Y\rangle X-\langle Z,X\rangle Y\right).
\end{equation}
Now, take \(V\in T_x\delta P_r\). Since \(N_x\) is normal to \(\delta P_r\), we have \(\langle V,N\rangle =0\). Therefore,
\begin{equation}
R\left(V,N\right)N=K^X\left(\langle N,N\rangle V-\langle N,V\rangle N\right)=K^X\left(V-0\right)=K^XV.
\end{equation}
Hence, for every \(V\in T_x\delta P_r\), we obtain
\begin{equation}
R\left(r\right)_x\left(V\right)=K^XV.
\end{equation}
Therefore,
\begin{equation}
R\left(r\right)_x=K^X\operatorname{Id}_{T_x\delta P_r}
\end{equation}
and globally
\begin{equation}
R\left(r\right)=K^X\operatorname{Id}_{T_\delta P_r}.
\end{equation}

Additionally, we state a useful property that each of the eigenvalues of $\mathscr{S}\left(r\right)$ satisfies the same scalar differential equation as (\ref{eq:7}). These eigenvalues are the so-called principal curvatures $\kappa_i\left(r\right)$, with $i\in\{1, 3, ...,n\}$. 

\begin{lemma}\cite[p.51]{gray2003tubes}\label{lem:3}
\textit{Let $\mathscr{S}$ be the shape operator of the tubular hypersurface $\delta P_r$. Then the principal curvatures $\kappa_i\left
(r\right)$ satisfy the following differential equation}
\begin{equation}\label{eq:8}
\kappa_i^{\prime}\left(r\right)=\kappa^2_i\left(r\right)+K^X,
\end{equation}
\textit{where $i\in\{1,3,...,n\}$ and $K^X$ represents the constant sectional curvature of the space form $X$.}
\end{lemma}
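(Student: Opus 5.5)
We derive the scalar equation (\ref{eq:8}) directly from the operator Riccati equation of Lemma \ref{lem:2}. By the computation preceding the statement, in a space form $R\left(r\right)=K^X\operatorname{Id}_{T\delta P_r}$, so (\ref{eq:7}) becomes
\begin{equation*}
\mathscr{S}^{\prime}\left(r\right)=\mathscr{S}^2\left(r\right)+K^X\operatorname{Id}.
\end{equation*}
We fix a point $p\in P$ and a unit normal $u\in\nu_pP$, and work along the normal geodesic $\gamma(r)=\exp_p(ru)$, $0<r\le a$. Along $\gamma$ we regard $\mathscr{S}(r)$ as a smooth one-parameter family of self-adjoint endomorphisms of the $(n-1)$-dimensional spaces $T_{\gamma(r)}\delta P_r$. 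We identify these spaces by parallel transport along $\gamma$. This is legitimate because $\gamma^{\prime}=N$ is parallel and parallel transport preserves orthogonality to $N$. After the identification we have a matrix ODE $S^{\prime}=S^2+K^X I$ on a fixed Euclidean space $E$, with $S(r)$ symmetric.

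The key step is to show that the eigenvectors can be chosen independent of $r$. We use an ODE uniqueness argument. Fix $r_0\in(0,a]$ and diagonalize $S(r_0)$ in an orthonormal basis $e_1,\dots,e_{n-1}$ of $E$ with eigenvalues $\kappa_i(r_0)$. For each $i$ we solve the scalar equation $\kappa_i^{\prime}=\kappa_i^2+K^X$ with initial value $\kappa_i(r_0)$, on a maximal interval around $r_0$. The diagonal matrix $D(r)=\operatorname{diag}(\kappa_i(r))$ in the basis $\{e_i\}$ then solves $D^{\prime}=D^2+K^X I$ with $D(r_0)=S(r_0)$. The right-hand side is locally Lipschitz, so the Picard--Lindelöf theorem gives $S\equiv D$ on the common interval of existence. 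Hence each $e_i$ (parallel transported) is an eigenvector of $\mathscr{S}(r)$ for all such $r$, and the corresponding eigenvalue satisfies (\ref{eq:8}).

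An alternative, coordinate-free check is available. If $v(r)$ is a parallel unit field with $\mathscr{S}(r)v=\kappa(r)v$, differentiating gives $\mathscr{S}^{\prime}v+\mathscr{S}v^{\prime}=\kappa^{\prime}v+\kappa v^{\prime}$. Since $v^{\prime}=0$, taking the inner product with $v$ yields $\kappa^{\prime}=\langle \mathscr{S}^{\prime}v,v\rangle=\kappa^2+K^X$.

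The main obstacle is the point just addressed: smoothness and ordering of eigenvalues of an $r$-dependent symmetric operator. In general eigenvalues are only continuous at crossings, and eigenvectors need not vary smoothly. The uniqueness argument removes this issue entirely, because it shows the eigenbasis is globally parallel.

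It remains to check that the solution extends over all of $(0,a]$. This holds because $\mathscr{S}$ is smooth on $\operatorname{exp}^{\perp}(O_P)\setminus P$, so $S(r)$ exists there, and by uniqueness it agrees with $D$ wherever both exist. A blow-up of some $\kappa_i$ inside $(0,a]$ would therefore contradict the smoothness of $S$.

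Finally, the indexing $i\in\{1,3,\dots,n\}$ reflects the $n-1$ principal directions: one longitudinal ($i=1$) and $n-2$ transverse. The latter behave like $\tfrac{1}{r}$ as $r\to 0$, giving the explicit solutions $\sqrt{K^X}\operatorname{cotg}(\sqrt{K^X}r)$ and its analogues. This explicit form is not needed for (\ref{eq:8}) itself.
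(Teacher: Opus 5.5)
Your proof is correct. The paper does not prove this lemma itself; it imports it from Gray. The only nearby argument is the scalar linearization $\kappa_i=-u'/u$, used afterwards in Lemma~\ref{lem:5} to \emph{solve} (\ref{eq:8}). So what you give is a self-contained derivation from Lemma~\ref{lem:2} and $R(r)=K^X\operatorname{Id}$, not a variant of an argument in the text.

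The steps all hold. Trivializing $N^{\perp}$ along a normal geodesic by parallel transport turns $\mathscr{S}'(r)$ into an ordinary derivative of a symmetric matrix. Picard--Lindel\"of then shows that the solution with diagonal data at $r_0$ stays diagonal. Continuity of $\mathscr{S}$ on $(0,a]$ excludes blow-up of any branch.

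The real gain over the citation is that you prove the eigenframe is parallel along each normal geodesic. The principal curvatures therefore form smooth, globally labelled branches, and distinct branches cannot even cross, by uniqueness for the scalar equation. The paper simply asserts at the start of the proof of Lemma~\ref{lem:5} that ``all the principal curvatures are differentiable functions''. It then splits them into one longitudinal and $n-2$ angular branches, which is exactly what your argument justifies.

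An equivalent alternative is to linearize the matrix equation itself. Write $\mathscr{S}=-A'A^{-1}$ with $A''+K^XA=0$, $A(r_0)=\operatorname{Id}$, $A'(r_0)=-\mathscr{S}(r_0)$. Then $A(r)$, and hence $\mathscr{S}(r)$, is a function of $\mathscr{S}(r_0)$, which yields the same commuting eigenframe.

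One correction to your closing remark; it lies outside the proof but matters downstream. With the paper's convention $\mathscr{S}(V)=-\nabla_VN$ and $N$ outward, the angular principal curvatures behave like $-1/r$, not $1/r$. The explicit solutions are $-\sqrt{K^X}\operatorname{cotg}(\sqrt{K^X}r)$ and its analogues, as in Lemma~\ref{lem:5}. The function $+\sqrt{K^X}\operatorname{cotg}(\sqrt{K^X}r)$ satisfies $\kappa'=-(\kappa^2+K^X)$, so it is not a solution of (\ref{eq:8}).
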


\section{Proof of theorem \ref{thm:1}}

We are ready to proceed further and derive the upper bound of $h(T(P,a))$ for $K^X>0$ and $K^X<0$. Dealing with the tubes in real space forms allows us to effectively exploit the general tools provided in \cite{gray2003tubes}.

\subsection{The upper bound}\label{sec.3.1}
\vspace{0.2cm}

Obviously, straight from the definition of $h(T(P,a))$, we know the estimate
\begin{equation*}
h\left(T\left(P,a\right)\right)\leq \frac{\left| \partial T(P,a)\right|}{\left| T(P,a)\right|}.
\end{equation*}
This elementary estimate can be used in all three cases of space forms to provide the upper bound. The upper bound for the case $K^X=0$ was computed in \cite{krejcirik2019cheeger}.

Let us briefly recall that the Riemannian volume form on $X$ is an $n$-form $\omega$. To every orthonormal frame, this volume form assigns the values $\pm 1$. In the following definition, we introduce the so-called volume function $\psi_u\left(r\right)$ that measures the distortion of the volume of $T\left(P,a\right)$ in the direction of the vector $u\in \left(T_pP\right)^{\perp}$.

Moreover, the subsequent definition describes the volume function $\psi_u\left(r\right)$ in terms of Fermi coordinates. 

\begin{definition}\cite[p.38]{gray2003tubes}\label{def:6}
\textit{Let $p\in P$ and $u\in \left(T_pP\right)^{\perp}$ be a unit vector. Then the volume function $\psi_u\left(r\right)$ is defined as}
\begin{equation}\label{eq:9}
\psi_u\left(r\right)=\omega\left(\frac{\partial}{\partial x_1}\wedge\hspace{0.1cm}...\hspace{0.1cm}\wedge\frac{\partial}{\partial x_n}\right)\left(\operatorname{exp}^{\perp}\left(p,ru\right)\right),
\end{equation}
\textit{where $0<r\leq a$ and $\left(p,ru\right)\in O_P$.}
\end{definition}

Definition \ref{def:6} tells us how the exponential map $\operatorname{exp}_{\eta}$ infinitesimally distorts the volume of $T(P,a)$. This distortion is measured by the infinitesimal change of volume function $\psi_u\left(r\right)$ with respect to the system of Fermi coordinates.

The following lemma provides the formula for the volume of $T\left(P,a\right)$.

\begin{lemma}\cite[p. 42]{gray2003tubes}\label{lem:4}
\textit{Since $\operatorname{exp}^{\perp}:\nu P\rightarrow X$ of a neighborhood of $P\in\nu P$ onto a neighborhood of $P\in X$, the volume function $\psi_u\left(r\right)$ determines the volume of $T(P,a)$ as}
\begin{equation}\label{eq:11}
    {\vert T(P,a)\vert=\int_0^a\int_P\int_{\mathbb{S}^{n-2}\left(1\right)}r^{n-2}\psi_u\left(r\right)\operatorname{du}\operatorname{dP}\operatorname{dr}},
\end{equation}
\textit{where $\psi_u\left(r\right)$ is given by the defining relation (\ref{eq:9}), $\operatorname{du}$ represents the volume element of the unit sphere $\mathbb{S}^{n-2}\left(1\right)$ and $\operatorname{dP}$ is the length element of $P$.}
\end{lemma}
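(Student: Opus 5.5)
This lemma is cited from \cite[p.~42]{gray2003tubes}. I will prove it by the change of variables formula for the diffeomorphism $\operatorname{exp}^{\perp}$, written in polar coordinates on the normal fibres.

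First, Proposition \ref{prop.1} and the non-self-intersection assumption give that $T(P,a)\subset \operatorname{exp}^{\perp}(O_P)$. Hence $\operatorname{exp}^{\perp}$ restricts to a diffeomorphism from the closed disc bundle $\{(p,v)\in\nu P : \lVert v\rVert\le a\}$ onto $T(P,a)$ (Definition \ref{def:2}). This lets me compute $|T(P,a)|$ as the integral of the pulled-back volume form $(\operatorname{exp}^{\perp})^{*}\omega$ over the disc bundle.

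Second, I will cover $P$ by arcs $U$ carrying an arclength parameter $y_1$ and orthonormal normal sections $A_2,\dots,A_n$. On each such piece the Fermi coordinates of Definition \ref{def:4} are exactly the coordinates $(y_1,t_2,\dots,t_n)$ on $\nu P|_U$ transported by $\operatorname{exp}^{\perp}$. In these coordinates the volume is
\[
\int \omega\left(\frac{\partial}{\partial x_1}\wedge\dots\wedge\frac{\partial}{\partial x_n}\right)\,dx_1\cdots dx_n .
\]
Here $dx_1=dP$ because $P$ is parametrized by length, and the orientation is fixed so that this coefficient is positive. Since $P$ is a closed curve, a partition of unity, or simply cutting $P$ at one point, patches the local pieces together. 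The integrand does not depend on the choice of frame, because a change of frame is an $SO(n-1)$ rotation in $(t_2,\dots,t_n)$ and has Jacobian $1$.

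Third, in the normal variables I switch to polar coordinates $(t_2,\dots,t_n)=ru$, with $r\in(0,a]$ and $u\in\mathbb{S}^{n-2}(1)$. This gives $dt_2\cdots dt_n=r^{n-2}\,dr\,du$. The coefficient evaluated at $\operatorname{exp}^{\perp}(p,ru)$ is exactly $\psi_u(r)$ by Definition \ref{def:6}. Fubini's theorem then yields formula (\ref{eq:11}), since the zero section $r=0$ has measure zero.

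The only delicate point is the bookkeeping. I need to check that the Fermi coordinate vector fields match the pushforwards of the coordinate fields on $\nu P$. This holds by construction, since $x_\beta=t_\beta$ and $x_1=y_1$. I also need $\psi_u(r)$ to be well defined independently of the local frame, which follows from the rotation invariance noted in the second step.
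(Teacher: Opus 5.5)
Your proposal is correct and is essentially the standard argument in \cite{gray2003tubes}, which the paper cites without reproducing: identify the Fermi chart with the trivialized normal disc bundle, integrate the density $\omega(\partial_{x_1}\wedge\dots\wedge\partial_{x_n})$ against $dP\,dt_2\cdots dt_n$, and pass to polar coordinates on the normal fibres, which is also how the paper itself unpacks the formula around (\ref{eq:37}). One small precision on your frame-independence remark: when the rotation $O(y_1)$ varies along $P$, the coordinate change $(y_1,t)\mapsto(y_1,O(y_1)^{-1}t)$ is not a pure rotation in $t$, but its Jacobian matrix is block-triangular with determinant $\det O(y_1)^{-1}=1$, so your conclusion still holds.
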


In the following proposition, we will state the fundamental differential equation that provides an implicit definition of the volume function $\psi_u\left(r\right)$, with respect to the shape operator $\mathscr{S}\left(r\right)$. 

\begin{prop}\cite[p.39]{gray2003tubes}\label{prop:3}
\textit{Let $\alpha$ be a unit speed geodesic in $\operatorname{exp}_{\eta}\left(O_P\right)$ that is normal to $P$, with $\alpha\left(0\right)=p\in P$ and $\alpha^{\prime}\left(0\right)=u\in \left(T_pP\right)^{\perp}$. Then, along the geodesic $\alpha$, the volume function $\psi_u\left(r\right)$ satisfies the following differential equation}
\begin{equation}\label{eq:10}
\frac{\psi^{\prime}_u\left(r\right)}{\psi_u\left(r\right)}=-\frac{n-2}{r}-\operatorname{Tr}\left(\mathscr{S}\left(r\right)\right),
\end{equation}
\textit{for $0<r\leq a$ and $\left(p,ru\right)\in O_P$, with the initial condition $\psi_u\left(0\right)=1$.}
\end{prop}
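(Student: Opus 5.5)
The plan is to identify $r^{n-2}\psi_u(r)$ with the volume density of $X$ in \emph{polar} Fermi coordinates. We then differentiate this density along the radial geodesic $\alpha$ using the divergence of the radial field $N$, and identify that divergence with $-\operatorname{Tr}\mathscr{S}(r)$ via Definition \ref{def:5}.

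\textbf{Step 1: polar Fermi coordinates.} Starting from the Fermi coordinates of Definition \ref{def:4}, write $(x_2,\dots,x_n)=r\,u$ with $r>0$ and $u\in\mathbb{S}^{n-2}(1)$. Fix local coordinates $(\theta_1,\dots,\theta_{n-2})$ on the unit sphere and let $\sigma(\theta)$ be the density of the standard spherical volume element, so that
\begin{equation*}
\frac{\partial}{\partial x_2}\wedge\cdots\wedge\frac{\partial}{\partial x_n}=\frac{1}{r^{n-2}\sigma(\theta)}\,\frac{\partial}{\partial r}\wedge\frac{\partial}{\partial \theta_1}\wedge\cdots\wedge\frac{\partial}{\partial \theta_{n-2}}.
\end{equation*}
This is a purely Euclidean change of variables in the fibre $\nu_pP$, pushed forward by $\exp^{\perp}$. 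Setting
\begin{equation*}
\vartheta(r):=\omega\Bigl(\frac{\partial}{\partial x_1}\wedge\frac{\partial}{\partial r}\wedge\frac{\partial}{\partial \theta_1}\wedge\cdots\wedge\frac{\partial}{\partial \theta_{n-2}}\Bigr)\bigl(\exp^{\perp}(p,ru)\bigr),
\end{equation*}
we obtain $\vartheta(r)=r^{n-2}\sigma(\theta)\,\psi_u(r)$. Since $\sigma$ does not depend on $r$, it follows that
\begin{equation*}
\frac{\vartheta'}{\vartheta}=\frac{n-2}{r}+\frac{\psi_u'}{\psi_u}.
\end{equation*}
The identity \eqref{eq:10} is therefore equivalent to $\vartheta'/\vartheta=-\operatorname{Tr}\mathscr{S}(r)$.

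\textbf{Step 2: divergence computation.} Along $\alpha$ we have $\partial/\partial r=N$, because the radial curves are unit speed normal geodesics. The coordinate fields $\partial_{x_1},\partial_{\theta_j}$ commute with $\partial_r$. Hence $\frac{d}{dr}\vartheta=(\mathcal{L}_N\omega)(\partial_{x_1}\wedge\partial_r\wedge\partial_\theta)$, because the Lie derivatives of the coordinate fields vanish. Using $\mathcal{L}_N\omega=(\operatorname{div}N)\,\omega$, this gives $\vartheta'=(\operatorname{div}N)\,\vartheta$.

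It remains to compute $\operatorname{div}N$. Take an orthonormal frame $\{e_i\}$ of $T\delta P_r$ completed by $N$. Since $\alpha$ is a geodesic, $\nabla_NN=0$. Therefore
\begin{equation*}
\operatorname{div}N=\sum_i\langle\nabla_{e_i}N,e_i\rangle=-\sum_i\langle\mathscr{S}(r)e_i,e_i\rangle=-\operatorname{Tr}\mathscr{S}(r),
\end{equation*}
which is exactly the sign convention of Definition \ref{def:5} and of the Weingarten map $\mathscr{S}(r)$ on $\delta P_r$ (well defined by the tangency check preceding Lemma \ref{lem:2}). Combined with Step 1, this yields \eqref{eq:10} for $0<r\le a$ with $(p,ru)\in O_P$, where $\exp^{\perp}$ is a diffeomorphism by Proposition \ref{prop.1}. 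That diffeomorphism property is what makes all coordinate fields above well defined and nonvanishing, so $\vartheta\neq0$ and the logarithmic derivative makes sense.

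\textbf{Step 3: initial condition (main obstacle).} The initial condition $\psi_u(0)=1$ is where care is needed, since the polar coordinates degenerate at $r=0$ and $\operatorname{Tr}\mathscr{S}(r)\sim (n-2)/r$ blows up there. My plan is to avoid polar coordinates at this step and work with the Fermi coordinates themselves. The function $r\mapsto\omega(\partial_{x_1}\wedge\cdots\wedge\partial_{x_n})(\exp^{\perp}(p,ru))$ is smooth up to $r=0$, being the Jacobian of the smooth map $\exp^{\perp}$ composed with the Fermi chart. At $r=0$, Proposition \ref{prop:2} says the fields $\partial_{x_1},\dots,\partial_{x_n}$ are orthonormal on $P$. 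With the orientation chosen consistently, this gives $\psi_u(0)=1$.

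One must also check that the singular terms cancel. The normal directions contribute principal curvatures $\kappa_i(r)=-1/r+O(r)$, and there are $n-2$ of them. This matches the asymptotics forced by Lemma \ref{lem:3}: with the initial blow-up, the Riccati equation \eqref{eq:8} has solutions $\sqrt{K^X}\operatorname{cotg}$-type behaving like $-1/r$. As a result, the right-hand side of \eqref{eq:10} stays bounded as $r\to0$, which is consistent with $\psi_u$ being smooth and positive near $0$.
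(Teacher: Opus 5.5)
Your proof is correct. The paper gives no argument of its own and simply imports this identity from Gray. Your route is the standard proof of it: you write $r^{n-2}\psi_u$ as the density $\omega(\partial_{x_1}\wedge\partial_r\wedge\partial_{\theta_1}\wedge\cdots\wedge\partial_{\theta_{n-2}})$ in polar Fermi coordinates, use that $\partial_r=N$ commutes with the other coordinate fields so the density evolves by $\mathcal{L}_N\omega=(\operatorname{div}N)\,\omega$ with $\operatorname{div}N=-\operatorname{Tr}\mathscr{S}(r)$, and read off $\psi_u(0)=1$ from Proposition~\ref{prop:2}.

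The asymptotic discussion at the end of Step~3 is only a consistency check, since smoothness and nonvanishing of $\psi_u$ near $r=0$ already follow from $\exp^{\perp}$ being a diffeomorphism. Watch the sign there: with $\mathscr{S}(V)=-\nabla_VN$ one has $\operatorname{Tr}\mathscr{S}(r)\sim-(n-2)/r$, not $+(n-2)/r$, and this is exactly what keeps the right-hand side of (\ref{eq:10}) bounded.
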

   
It is relatively straightforward to prove that the formula for the $\left(n-1\right)$-dimensional volume of $\partial T(P,a)$ satisfies $\frac{\partial}{\partial a}\vert T(P,a)\vert=\vert\partial T(P,a)\vert$ \cite[p.42]{gray2003tubes}.

In order to exploit the proposition \ref{prop:3}, we shall first compute the principal curvatures $\kappa_i\left(r\right)$ from which we subsequently obtain the trace of the shape operator $\operatorname{Tr}\left(\mathscr{S}\left(r\right)\right)$. This will allow us to compute $\psi_u\left(r\right)$ as a solution of (\ref{eq:10}).

Therefore, we must first prove the following lemma.

\begin{lemma}\label{lem:5}
\textit{The principal curvatures of the tubular hypersurface $\delta P_r$ about $P$ satisfy}
\begin{equation}\label{eq:12}
\kappa_1\left(r\right)=
\begin{cases}
\begin{aligned}
\frac{\sqrt{K^X}\operatorname{sin}\left(\sqrt{K^X}r\right)+\kappa_1\left(0\right)\operatorname{cos}\left(\sqrt{K^X}r\right)}{\operatorname{cos}\left(\sqrt{K^X}r\right)-\frac{\kappa_1\left(0\right)}{\sqrt{K^X}}\operatorname{sin}\left(\sqrt{K^X}r\right)},\hspace{0.2cm} K^X>0 \\[1ex]
\frac{\kappa_1\left(0\right)}{1-\kappa_1\left(0\right)r}, \hspace{0.2cm}K^X=0 \\[1ex]
\frac{\sqrt{\vert K^X\vert}\operatorname{sinh}\left(\sqrt{\vert K^X\vert}r\right)+\kappa_1\left(0\right)\operatorname{cosh}\left(\sqrt{\vert K^X\vert}r\right)}{\operatorname{cosh}\left(\sqrt{\vert K^X\vert}r\right)-\frac{\kappa_1\left(0\right)}{\sqrt{\vert K^X\vert}}\operatorname{sinh}\left(\sqrt{\vert K^X\vert}r\right)},\hspace{0.2cm}K^X<0
\end{aligned}
\end{cases}
\end{equation}
\textit{where $\kappa_1\left(0\right)$ is finite,}
\begin{equation}\label{eq:13}
\kappa_i\left(r\right)=
\begin{cases}
\begin{aligned}
-\sqrt{K^X}\operatorname{cotg}\left(\sqrt{K^X}r\right),\hspace{0.2cm}K^X>0 \\[1ex]
-\frac{1}{r}, \hspace{0.2cm}K^X=0 \\[1ex]
-\sqrt{\vert K^X\vert}\operatorname{cotgh}\left(\sqrt{\vert K^X\vert
}r\right),\hspace{0.2cm}K^X<0
\end{aligned}
\end{cases},
\end{equation}
\textit{for $i\in\{3,...,n\}$, with $\kappa_i\left(0\right)=-\infty$.}
\end{lemma}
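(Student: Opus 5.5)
The plan is to integrate the scalar Riccati equation (\ref{eq:8}) of Lemma \ref{lem:3}, namely $\kappa_i^{\prime}(r)=\kappa_i^2(r)+K^X$, once for each principal direction, with the initial data fixed by the local geometry near $P$. Since the equation is autonomous, the substitution $\kappa_i=-f^{\prime}/f$ linearizes it to $f^{\prime\prime}+K^X f=0$. For $K^X>0$ a basis of solutions is $\cos(\sqrt{K^X}r)$ and $\sin(\sqrt{K^X}r)/\sqrt{K^X}$; for $K^X=0$ it is $1,r$; for $K^X<0$ it is $\cosh(\sqrt{|K^X|}r)$ and $\sinh(\sqrt{|K^X|}r)/\sqrt{|K^X|}$. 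Each $\kappa_i$ is then a Möbius-type expression in these functions, determined by one constant. Equivalently, one can separate variables, $d\kappa/(\kappa^2+K^X)=dr$, which gives $\kappa=\sqrt{K^X}\tan(\sqrt{K^X}r+c)$ in the positive case and the analogous $\tanh$ or $\coth$ forms in the negative case. The addition formula then puts the result in the displayed form.

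The first key step is to identify the eigendirections of $\mathscr{S}(r)$ and the right initial conditions. At $x=\exp_p(ru)$ the tangent space $T_x\delta P_r$ splits into two parts:
\begin{itemize}
\item the \emph{longitudinal} direction, the parallel transport of $T_pP$ along the normal geodesic (index $1$);
\item the $n-2$ \emph{transversal} directions, tangent to the geodesic sphere of radius $r$ in $\exp_p(\nu_pP)$ (indices $3,\dots,n$).
\end{itemize}
In a space form these subspaces are preserved along the geodesic. This is because the Jacobi equation decouples: $R(\cdot,N)N=K^X\operatorname{Id}$, as computed above. Hence Lemma \ref{lem:3} applies to each of them separately.

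For the transversal directions, the Jacobi fields vanish at $r=0$ (they come from varying $u$ in the unit normal sphere). So $f(0)=0$, and up to scaling $f=\sin(\sqrt{K^X}r)/\sqrt{K^X}$, $r$, or $\sinh(\sqrt{|K^X|}r)/\sqrt{|K^X|}$. With the paper's sign convention $\mathscr{S}=-\nabla N$, this gives $\kappa_i=-f^{\prime}/f$, which is (\ref{eq:13}). Equivalently, it is the unique solution of (\ref{eq:8}) with $\kappa_i(r)\to-\infty$ as $r\to0^+$, matching the behaviour $-1/r$ of the Euclidean sphere.

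For the longitudinal direction, the Jacobi field starts at $\partial/\partial x_1$, which has unit length by Proposition \ref{prop:2}, so $f(0)=1$. Its initial derivative is governed by the second fundamental form of $P$ in the direction $u$. This gives a finite limit $\kappa_1(0)=\lim_{r\to0}\kappa_1(r)$; it equals, up to sign, the geodesic curvature of $P$ projected onto $u$. Solving with $f(0)=1$ and $f^{\prime}(0)=-\kappa_1(0)$ gives, for $K^X>0$,
\[
f(r)=\cos(\sqrt{K^X}r)-\frac{\kappa_1(0)}{\sqrt{K^X}}\sin(\sqrt{K^X}r).
\]
Then $-f^{\prime}/f$ is exactly the first line of (\ref{eq:12}), and the other two lines follow in the same way.

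The main obstacle is justifying this initial data and the invariance of the eigenspaces. One must check three things:
\begin{itemize}
\item the longitudinal limit is finite and equals the normal curvature of $P$ in direction $u$;
\item the transversal eigenvalues blow up like $-1/r$ (the limit only makes sense along a fixed normal geodesic, as the text remarks);
\item the parallel-transported splitting stays an eigen-decomposition of $\mathscr{S}(r)$.
\end{itemize}
The third point uses that $\mathscr{S}(r)$ is diagonal at small $r$ in this frame and that (\ref{eq:7}), with $R(r)=K^X\operatorname{Id}$, preserves diagonality by uniqueness of ODE solutions. For the first two, I would expand the Jacobi fields in Fermi coordinates to first order in $r$, in the spirit of \cite[Ch.~3]{gray2003tubes}, and read off the leading behaviour.

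A remaining subtlety is that for $n\ge3$ the transversal directions orthogonal to the principal normal of $P$ also have longitudinal mixing. However, since $P$ is one-dimensional, $T_pP$ is a single line, and the splitting above is exactly an eigen-decomposition. Finally, non-self-intersection of $T(P,a)$ ensures that the denominators in (\ref{eq:12}) do not vanish on $(0,a]$.
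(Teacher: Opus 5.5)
Your proposal is correct and follows essentially the paper's route: the paper also linearizes the Riccati equation (\ref{eq:8}) via $\kappa_i=-u'/u$ and solves $u''+K^Xu=0$. It then writes $\kappa_i(0)=-B\sqrt{K^X}/A$ and obtains (\ref{eq:13}) as the singular limit $\kappa_i(0)\to-\infty$, which is exactly your condition $f(0)=0$ (i.e.\ $A=0$). Your Jacobi-field justification of the initial data, and of the invariance of the longitudinal/transversal splitting, sharpens points the paper only asserts rather than constituting a different method.
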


\hspace{-0.7cm}
\vspace{0.2cm}
\textbf{Proof of lemma \ref{lem:5}.}

Notice that all the principal curvatures are differentiable functions. Then if we consider the first case when $K^X>0$, we have
\begin{equation}\label{eq:14}
\kappa^{\prime}_i\left(r\right)=\kappa^2_i\left(r\right)+K^X.
\end{equation}
This equation can be solved via substitution $\kappa_i\left(r\right)=-\frac{u^{\prime}\left(r\right)}{u\left(r\right)}$, where \(u\left(r\right)\neq 0\), providing us with the following transformation

\begin{equation}\label{eq:15}
-\frac{u^{\prime\prime}\left(r\right)u\left(r\right)-\left(u^{\prime}\left(r\right)\right)^2}{u^2\left(r\right)}=\left(\frac{u^{\prime}\left
(r\right)}{u\left(r\right)}\right)^2+K^X,
\end{equation}
\begin{equation}\label{eq:16}
-u^{\prime\prime}\left(r\right)u\left(r\right)-K^Xu^2\left(r\right)=0.
\end{equation}
We have arrived at a second-order differential equation with constant coefficients
\begin{equation}\label{eq:17}
u^{\prime\prime}\left(r\right)+K^Xu\left(r\right)=0.
\end{equation}

Writing $K^X=\alpha^2$, where $\alpha>0$, the solution of (\ref{eq:17}) is given by

\begin{equation}\label{eq:18}
u\left(r\right)=A\operatorname{cos}\left(\alpha r\right)+B\operatorname{sin}\left(\alpha r\right),
\end{equation}
where \(A,B\) are the integration constants.

By plugging the above result back into the substitution, we attain
\begin{equation}\label{eq:19}
\kappa_i\left(r\right)=-\frac{-A\alpha\operatorname{sin}\left(\alpha r\right)+B\alpha\operatorname{cos}\left(\alpha r\right)}{A\operatorname{cos}\left(\alpha r\right)+B\operatorname{sin}\left(\alpha r\right)}.
\end{equation}

Since $K^X=\alpha^2$, we can rewrite the general solution as
\begin{equation}\label{eq:20}
\kappa_i\left(r\right)=\frac{\sqrt{K^X}\left(A\operatorname{sin}\left(\sqrt{K^X} r\right)-B\operatorname{cos}\left(\sqrt{K^X} r\right)\right)}{A\operatorname{cos}\left(\sqrt{K^X} r\right)+B\operatorname{sin}\left(\sqrt{K^X} r\right)}.    
\end{equation}

The initial condition is determined by 
\begin{equation}\label{eq:21}
\kappa_i\left(0\right)=\frac{-B\sqrt{K^X}}{A}.
\end{equation}
Plugging this back into (\ref{eq:20}) gives us the complete solution for the principal curvatures of $\delta P_r$
\begin{equation}\label{eq:22}
\kappa_i\left(r\right)=\frac{\sqrt{K^X}\operatorname{sin}\left(\sqrt{K^X}r\right)+\kappa_i\left(0\right)\operatorname{cos}\left(\sqrt{K^X}r\right)}{\operatorname{cos}\left(\sqrt{K^X}r\right)-\frac{\kappa_i\left(0\right)}{\sqrt{K^X}}\operatorname{sin}\left(\sqrt{K^X}r\right)}.
\end{equation}

The formula (\ref{eq:22}) describes the solution of the Riccati equation with a finite initial value \(\kappa_i\left(0\right)\). However, for the tubular hypersurface \(\delta P_r\) around \(P\), its tangent space \(T_x\delta P_r\) is composed of one longitudinal direction and \(n-2\) angular directions. The longitudinal direction comes from moving the base point \(p\in P\) and this direction survives as \(r\longrightarrow 0\), because it converges to \(T_p P\). The angular directions are different. They come from changing the normal direction \(u\in \mathbb{S}\left(\left(T_pP\right)^{\perp}\right)\cong \mathbb{S}^{n-2}\). But when \(r=0\), all points \(\operatorname{exp}_p\left(ru\right)\) collapse to the same point \(p\), independently of \(u\). 

Therefore, the angular sphere collapses to a point at \(r=0\). Hence, the angular directions do not define genuine tangent directions of \(P\). They are tangent to the small normal spheres around \(p\), and those spheres shrink to zero size at \(r\longrightarrow 0\).

Consequently, the corresponding principal curvatures cannot have finite initial values. Exactly as for an ordinary Euclidean sphere of radius \(r\), the angular principal curvatures behave like \(-1/r\). This means that the appropriate initial condition is \(\kappa_i\left(0\right)=-\infty\) for all \(i\in\{3,...,n\}\). 

Therefore, the angular principal curvatures are obtained by taking the singular limit \(\kappa_i\left(0\right)\longrightarrow -\infty\) in (\ref{eq:22}). We obtain

\begin{equation*}\label{eq.36}
\lim_{\kappa_i\left(0\right)\longrightarrow-\infty}\frac{\sqrt{K^X}\operatorname{sin}\left(\sqrt{K^X}r\right)+\kappa_i\left(0\right)\operatorname{cos}\left(\sqrt{K^X}r\right)}{\operatorname{cos}\left(\sqrt{K^X}r\right)-\frac{\kappa_i\left(0\right)}{\sqrt{K^X}}\operatorname{sin}\left(\sqrt{K^X}r\right)}=\lim_{\kappa_i\left(0\right)\rightarrow \infty}\frac{\frac{\sqrt{K^X}\operatorname{sin}\left(\sqrt{K^X}r\right)}{\kappa_i\left(0\right)}+\operatorname{cos}\left(\sqrt{K^X}r\right)}{\frac{\operatorname{cos}\left(\sqrt{K^X}r\right)}{\kappa_i\left(0\right)}-\frac{1}{\sqrt{K^X}}\operatorname{sin}\left(\sqrt{K^X}r\right)}=
\end{equation*}
\begin{equation}\label{eq:23}
\hspace{7.5cm}
=-\sqrt{K^X}\operatorname{cotg}\left(\sqrt{K^X}r\right).
\end{equation}

In the case of \(i=1\) and a finite initial condition $\kappa_1\left(0\right)$, we can consider \(\kappa_1\left(r\right)\) as the first principal curvature in the tangential direction to the underlying curve
\begin{equation}\label{eq:24}
\kappa_1\left(r\right)=\frac{\sqrt{K^X}\operatorname{sin}\left(\sqrt{K^X}r\right)+\kappa_1\left(0\right)\operatorname{cos}\left(\sqrt{K^X}r\right)}{\operatorname{cos}\left(\sqrt{K^X}r\right)-\frac{\kappa_1\left(0\right)}{\sqrt{K^X}}\operatorname{sin}\left(\sqrt{K^X}r\right)},
\end{equation}
where $\kappa_1\left(0\right)$ is the normal curvature of $P$ in the chosen normal direction \(u\). 

Straightforwardly, the Euclidean case can be derived from the following limit
\begin{equation}\label{eq:26}
\lim_{K^X\rightarrow 0}\frac{\sqrt{K^X}\operatorname{sin}\left(\sqrt{K^X}r\right)+\kappa_1\left(0\right)\operatorname{cos}\left(\sqrt{K^X}r\right)}{\operatorname{cos}\left(\sqrt{K^X}r\right)-\frac{\kappa_1\left(0\right)}{\sqrt{K^X}}\operatorname{sin}\left(\sqrt{K^X}r\right)}=\frac{\kappa_1\left(0\right)}{1-\kappa_1\left(0\right)r}.
\end{equation}

This corresponds to the first principal curvature of $\delta P_r$ along the tangential direction to $P$, where its curvature $\kappa_1\left(0\right)$ is considered finite. 

Analogously, the principal curvatures in the normal directions to $P$ can be obtained by
\begin{equation}\label{eq:27}
\lim_{\kappa_1\left(0\right)\rightarrow -\infty}\frac{\kappa_1\left(0\right)}{1-\kappa_1\left(0\right)r}=\lim_{\kappa_1\left(0\right)\rightarrow -\infty}\frac{1}{\frac{1}{\kappa_1\left(0\right)}-r}=-\frac{1}{r}.
\end{equation}

Just for clarity, the identical result can be obtained by taking the limit of (\ref{eq.36}), with $K^X\rightarrow 0$.  

Finally, in the hyperbolic case with $K^X<0$, the proof remains nearly identical. The only requirement is to replace the functions $\operatorname{sin}, \operatorname{cos}$ and $\operatorname{cotg}$ by their hyperbolic variants and substitute $K^X$ by $\vert K^X\vert$. 

This completes the proof of lemma \ref{lem:5}.

\vspace{0.2cm}
The result of lemma \ref{lem:5} enables us to compute the trace of $\mathscr{S}\left(r\right)$ straightforwardly from the relations (\ref{eq:12}) and (\ref{eq:13}). The trace is given by the summation of $\kappa_i\left(r\right)$ over all $i\in\{1,...,n\}$, where $i\neq 2$. Therefore
\begin{equation}\label{eq:28}
\hspace{-0.8cm}
\operatorname{Tr}\left(\mathscr{S}\left(r\right)\right)=
\begin{cases}
\begin{aligned}
\frac{\sqrt{K^X}\operatorname{sin}\left(\sqrt{K^X}r\right)+\kappa_1\left(0\right)\operatorname{cos}\left(\sqrt{K^X}r\right)}{\operatorname{cos}\left(\sqrt{K^X}r\right)-\frac{\kappa_1\left(0\right)}{\sqrt{K^X}}\operatorname{sin}\left(\sqrt{K^X}r\right)}-\left(n-2\right)\sqrt{K^X}\operatorname{cotg}\left(\sqrt{K^X}r\right), \hspace{0.2cm} K^X>0 \\[1ex]
\frac{\kappa_1\left(0\right)}{1-\kappa_1\left
(0\right)r}-\frac{n-2}{r}, \hspace{0.2cm} K^X=0 \\[1ex]
\frac{\sqrt{\vert K^X\vert}\operatorname{sinh}\left(\sqrt{\vert K^X\vert}r\right)+\kappa_1\left(0\right)\operatorname{cosh}\left(\sqrt{\vert K^X\vert}r\right)}{\operatorname{cosh}\left(\sqrt{\vert K^X\vert}r\right)-\frac{\kappa_1\left(0\right)}{\sqrt{\vert K^X\vert}}\operatorname{sinh}\left(\sqrt{\vert K^X\vert}r\right)}-\left(n-2\right)\sqrt{\vert K^X\vert}\operatorname{cotgh}\left(\sqrt{\vert K^X\vert}r\right), \hspace{0.2cm} K^X<0.
\end{aligned}
\end{cases}
\end{equation}

This leads us to the following lemma, where we solve the differential equation for the volume function $\psi_u\left(r\right)$.
\begin{lemma}\label{lem:6}
\textit{The volume function $\psi_u\left(r\right)$ satisfying the differential equation (\ref{eq:10}) with the initial condition $\psi_u\left(0\right)=1$, is given by}
\begin{equation}\label{eq:29}
\psi_u\left(r\right)=
\begin{cases}
\begin{aligned}
\left(\frac{\operatorname{sin}\left(\sqrt{K^X}r\right)}{\sqrt{K^X}r}\right)^{n-2}\left(\operatorname{cos}\left(\sqrt{K^X}r\right)-\frac{\kappa_1\left(0\right)}{\sqrt{K^X}}\operatorname{sin}\left(\sqrt{K^X}r\right)\right), \hspace{0.2cm} K^X>0 \\[1ex]
1-\kappa_1\left(0\right)r, \hspace{0.2cm} K^X=0 \\[1ex]
\left(\frac{\operatorname{sinh}\left(\sqrt{\vert K^X\vert}r\right)}{\sqrt{\vert K^X\vert}r}\right)^{n-2}\left(\operatorname{cosh}\left(\sqrt{\vert K^X\vert}r\right)-\frac{\kappa_1\left(0\right)}{\sqrt{\vert K^X\vert}}\operatorname{sinh}\left(\sqrt{\vert K^X\vert}r\right)\right), \hspace{0.2cm} K^X<0
\end{aligned}
\end{cases}
\end{equation}
\end{lemma}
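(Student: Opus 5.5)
The plan is to integrate the logarithmic ODE (\ref{eq:10}) directly, using the explicit trace (\ref{eq:28}). The right-hand side of (\ref{eq:10}) is then a sum of terms that are each exact logarithmic derivatives. I carry this out for $K^X>0$ and then read off the other two cases.

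Write $\alpha=\sqrt{K^X}$ and set $D(r)=\cos(\alpha r)-\frac{\kappa_1(0)}{\alpha}\sin(\alpha r)$. Differentiating gives
\begin{equation*}
D'(r)=-\alpha\sin(\alpha r)-\kappa_1(0)\cos(\alpha r).
\end{equation*}
Comparing with (\ref{eq:24}), this is exactly $-\kappa_1(r)D(r)$, so $-\kappa_1(r)=(\log D)'(r)$. For the angular part, $(n-2)\alpha\operatorname{cotg}(\alpha r)=(n-2)\bigl(\log\sin(\alpha r)\bigr)'$. Substituting (\ref{eq:28}) into (\ref{eq:10}) therefore gives
\begin{equation*}
(\log\psi_u)'(r)=-(n-2)\bigl(\log r\bigr)'+(n-2)\bigl(\log\sin(\alpha r)\bigr)'+\bigl(\log D\bigr)'(r)
=\Bigl(\log\Bigl[\Bigl(\tfrac{\sin(\alpha r)}{\alpha r}\Bigr)^{n-2}D(r)\Bigr]\Bigr)'.
\end{equation*}
Inserting the constant $\alpha^{n-2}$ inside the logarithm does not change the derivative, and it is what makes the normalization below come out right.

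Integrating from $0$ to $r$ yields $\psi_u(r)=C\bigl(\frac{\sin(\alpha r)}{\alpha r}\bigr)^{n-2}D(r)$ for some constant $C$. The bracket tends to $1$ as $r\to0^+$, since $\frac{\sin(\alpha r)}{\alpha r}\to1$ and $D(0)=1$. The initial condition $\psi_u(0)=1$ then forces $C=1$, which is the first line of (\ref{eq:29}).

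The hyperbolic case is identical with $\sin,\cos$ replaced by $\sinh,\cosh$. The sign bookkeeping uses $(\cosh)'=\sinh$, so that $D'=|K^X|^{1/2}\sinh-\kappa_1(0)\cosh=-\kappa_1 D$, again matching (\ref{eq:12}). The Euclidean case follows either by the same direct computation, since $-\frac{\kappa_1(0)}{1-\kappa_1(0)r}=(\log(1-\kappa_1(0)r))'$ and the $\frac{n-2}{r}$ terms cancel, or as the limit $K^X\to0$ of the spherical formula.

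The only delicate point is the singular endpoint. The ODE holds only for $0<r\le a$, and both $-\frac{n-2}{r}$ and the angular curvatures blow up at $0$, so I integrate on $[\varepsilon,r]$ and let $\varepsilon\to0$. One must check that these singular parts cancel exactly rather than leaving a power of $r$. They do, because $(n-2)\bigl(\alpha\operatorname{cotg}(\alpha r)-\frac1r\bigr)$ is bounded near $0$, so $\log\psi_u$ has a finite limit and the normalization $\psi_u(0)=1$ is meaningful. Positivity of $D$ on $[0,a]$, needed so the logarithms are defined, follows from $T(P,a)$ being a regular tube: $D$ vanishes exactly at focal points, and these are excluded by the non-self-intersection hypothesis.
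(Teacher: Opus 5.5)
Your argument is the paper's: substitute the trace (\ref{eq:28}) into (\ref{eq:10}), recognize the right-hand side as a logarithmic derivative, integrate, and fix the constant as $r\to0$. Naming the three logarithmic derivatives and checking that $-\frac{n-2}{r}+(n-2)\sqrt{K^X}\operatorname{cotg}(\sqrt{K^X}r)$ stays bounded is a tidier version of the paper's direct computation of $\lim_{r\to0}\psi_u(r)$.

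The one thing to fix is your remark on the case $K^X<0$. Your identity $D'=\sqrt{|K^X|}\sinh(\sqrt{|K^X|}r)-\kappa_1(0)\cosh(\sqrt{|K^X|}r)=-\kappa_1D$ does not match (\ref{eq:12}) as printed. There the numerator is $+\sqrt{|K^X|}\sinh(\sqrt{|K^X|}r)+\kappa_1(0)\cosh(\sqrt{|K^X|}r)$, so $-\kappa_1D=-\sqrt{|K^X|}\sinh(\sqrt{|K^X|}r)-\kappa_1(0)\cosh(\sqrt{|K^X|}r)\neq D'$. Inserting the printed hyperbolic line of (\ref{eq:28}) into (\ref{eq:10}) therefore would not integrate to (\ref{eq:29}); the paper's one-line ``analogously'' hides the same problem.

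The printed formula has a sign slip. For $\kappa_1(0)=0$ it gives $\sqrt{|K^X|}\tanh(\sqrt{|K^X|}r)$, which solves $\kappa'=-\kappa^2-K^X$ rather than (\ref{eq:8}). The actual solution of (\ref{eq:8}) has numerator $-\sqrt{|K^X|}\sinh(\sqrt{|K^X|}r)+\kappa_1(0)\cosh(\sqrt{|K^X|}r)$, and with it both your identity and the stated $\psi_u$ are correct.

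So do not cite (\ref{eq:12}) here; derive $-\kappa_1=(\log D)'$ from Lemma \ref{lem:3} instead. In terms of the model functions, $D=C(r)-\kappa_1(0)S(r)$ solves $u''+K^Xu=0$ with $u(0)=1$, $u'(0)=-\kappa_1(0)$. Hence $-u'/u$ satisfies $\kappa'=\kappa^2+K^X$ with initial value $\kappa_1(0)$, and it equals $\kappa_1$ by uniqueness while $D>0$. Combined with $-\frac{n-2}{r}+(n-2)\frac{C(r)}{S(r)}=(n-2)\bigl(\log\frac{S(r)}{r}\bigr)'$, this gives $\psi_u(r)=\bigl(\frac{S(r)}{r}\bigr)^{n-2}\bigl(C(r)-\kappa_1(0)S(r)\bigr)$ for all three signs of $K^X$ at once, with no case-by-case sign bookkeeping. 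In the spherical case, taking the logarithm of $\sin(\sqrt{K^X}r)$ also needs $\sin(\sqrt{K^X}r)>0$ on $(0,a]$, which follows from $a<\pi/\sqrt{K^X}$.
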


\vspace{0.2cm}

\hspace{-0.6cm}\textbf{Proof of lemma \ref{lem:6}.}

Let us first focus on the spherical case when $K^X>0$. We can plug the formula (\ref{eq:28}) back into the differential equation (\ref{eq:10}), this gives us
\begin{equation}\label{eq:30}
\frac{\psi^{\prime}_u\left(r\right)}{\psi_u\left(r\right)}=-\frac{n-2}{r}-\frac{\sqrt{K^X}\operatorname{sin}\left(\sqrt{K^X}r\right)+\kappa_1\left(0\right)\operatorname{cos}\left(\sqrt{K^X}r\right)}{\operatorname{cos}\left(\sqrt{K^X}r\right)-\frac{\kappa_1\left(0\right)}{\sqrt{K^X}}\operatorname{sin}\left(\sqrt{K^X}r\right)}+\left(n-2\right)\sqrt{K^X}\operatorname{cotg}\left(\sqrt{K^X}r\right).
\end{equation}

It is evident that the structure of the differential equation corresponds to the first-order ODE with the right-hand side as a function of $r$
\begin{equation}\label{eq:31}
\frac{\psi^{\prime}_u\left(r\right)}{\psi_u\left(r\right)}=f\left(r\right),
\end{equation}
where $f\left(r\right)=-\frac{n-2}{r}-\frac{\sqrt{K^X}\operatorname{sin}\left(\sqrt{K^X}r\right)+\kappa_1\left(0\right)\operatorname{cos}\left(\sqrt{K^X}r\right)}{\operatorname{cos}\left(\sqrt{K^X}r\right)-\frac{\kappa_1\left(0\right)}{\sqrt{K^X}}\operatorname{sin}\left(\sqrt{K^X}r\right)}+\left(n-2\right)\operatorname{cotg}\left(\sqrt{K^X}r\right)$. Thus, the solution of (\ref{eq:31}) is given by
\begin{equation}\label{eq:32}
\psi_u\left(r\right)=C\operatorname{exp}\left({\int f\left(r\right)\operatorname{dr}}\right),
\end{equation}
where $C\in\mathbb{R}$ is the integration constant.

Therefore, we obtain
\begin{equation}\label{eq:33}
\hspace{2.5cm}
\psi_u\left(r\right)=C\left(r^{2-n}\operatorname{sin}^{n-2}\left(\sqrt{K^X}r\right)\left(\operatorname{cos}\left(\sqrt{K^X}r\right)-\frac{\kappa_1\left(0\right)}{\sqrt{K^X}}\operatorname{sin}\left(\sqrt{K^X}r\right)\right)\right),
\end{equation}
where the integration constant $C\in\mathbb{R}$ depends on the initial condition.

Now we compute the limit of (\ref{eq:33}) as \(r\longrightarrow 0\). The first term yields
\begin{equation*}
\lim_{r\rightarrow 0}r^{2-n}\operatorname{sin}^{n-2}\left(\sqrt{K^X}r\right)=\lim_{r\rightarrow 0}\left(\frac{\operatorname{sin}\left(\sqrt{K^X}r\right)}{r}\right)^{n-2}=\left(\sqrt{K^X}\right)^{n-2},
\end{equation*}
and the mixed bracket in (\ref{eq:33}) becomes $1$. Therefore
\begin{equation}
\lim_{r\longrightarrow 0}\psi_u\left(r\right)=C\left
(\sqrt{K^X}\right)^{n-2}. 
\end{equation}

The initial condition is \(\psi_u\left(0\right)=1\). Hence \(C\left(\sqrt{K^X}\right)^{n-2}=1\), so \(C=\left(\sqrt{K^X}\right)^{2-n}\). Then the volume function is
\begin{equation}\label{eq:34}
\psi_u\left(r\right)=\left(\frac{\operatorname{sin}\left(\sqrt{K^X}r\right)}{\sqrt{K^X}r}\right)^{n-2}\left(\operatorname{cos}\left(\sqrt{K^X}r\right)-\frac{\kappa_1\left(0\right)}{\sqrt{K^X}}\operatorname{sin}\left(\sqrt{K^X}r\right)\right).
\end{equation}

In the Euclidean case, the differential equation (\ref{eq:10}) becomes
\begin{equation}
\frac{\psi^{\prime}_u\left(r\right)}{\psi_u\left(r\right)}=-\frac{\kappa_1\left(0\right)}{1-\kappa_1\left(0\right)r}.
\end{equation}
The solution is given by
\begin{equation}
\psi_u\left(r\right)=1-\kappa_1\left(0\right)r.
\end{equation}

Analogously, the proof for the hyperbolic case remains identical. One just needs to interchange the trigonometric functions in (\ref{eq:34}) with their hyperbolic counterparts and swap the sectional curvature $K^X$ with $\vert K^X\vert$ inside the square root.

This completes the proof of lemma \ref{lem:6}.

\vspace{0.2cm}
Based on the result of lemma \ref{lem:6}, we are now able to compute the volume of $T\left(P,a\right)$ and $\vert\partial T\left(P,a\right)\vert$. Nonetheless, it is worth noting that the result of lemma \ref{lem:4} actually tells us that we can reconsider the integration $\int_0^a\int_{\mathbb{S}^{n-2}\left(1\right)}\psi_u\left(r\right)r^{n-2}\operatorname{dr}$ as the integration of $\psi\left(r\right)$ over a ball with radius $a>0$. 

More precisely, if \(g\left(r\right)\) is a radial function, i.e. it depends only on the radial distance \(r\) from the origin, then the integration of $g\left(r\right)$ over the $\left(n-1\right)$-dimensional normal ball $B\left(a\right)$ with radius $a>0$ can be reduced to a one-dimensional integral from $0$ to $a$, involving the surface measure $\vert\mathbb{S}^{n-2}\left(1\right)\vert$ and the factor $r^{n-2}$. Therefore,
\begin{equation}\label{eq:37}
\vert T\left(P,a\right)\vert=\int_P\int_{B\left(a\right)}\psi_u\left(r\right)\operatorname{dV_{B\left(a\right)}}\operatorname{dP}.
\end{equation}

At this point, we directly pass to polar coordinates in the normal ball \(B\left(a\right)\), where the volume element is given by
\begin{equation}
\operatorname{dV_{B\left(a\right)}}=\vert\mathbb{S}^{n-2}\left(1\right)\vert r^{n-2}\operatorname{dr}.
\end{equation}
Consequently, we have
\begin{equation}
\vert T\left(P,a\right)\vert=\vert P\vert\vert\mathbb{S}^{n-2}\left(1\right)\vert\int_0^a r^{n-2}\psi_u\left(r\right)\operatorname{dr}.
\end{equation}

First, picking the spherical case $K^X>0$, the volume of $T\left(P,a\right)$ can be computed as 
\begin{equation}\label{eq:38}
\begin{split}
\vert T\left(P,a\right)\vert=\vert P\vert\vert\mathbb{S}^{n-2}\left(1\right)\vert\int_0^ar^{n-2}\left(\frac{\operatorname{sin}\left(\sqrt{K^X}r\right)}{\sqrt{K^X}r}\right)^{n-2}\left(\operatorname{cos}\left(\sqrt{K^X}r\right)-\frac{\kappa_1\left(0\right)}{\sqrt{K^X}}\operatorname{sin}\left(\sqrt{K^X}r\right)\right)\operatorname{dr} \\
=\frac{\vert P\vert\vert\mathbb{S}^{n-2}\left(1\right)\vert}{\left(\sqrt{K^X}\right)^{n-2}}\int_0^a\operatorname{sin}^{n-2}\left(\sqrt{K^X}r\right)\left(\operatorname{cos}\left(\sqrt{K^X}r\right)-\frac{\kappa_1\left(0\right)}{\sqrt{K^X}}\operatorname{sin}\left(\sqrt{K^X}r\right)\right)\operatorname{dr}.
\end{split}
\end{equation}

Since \(P\) is parametrized by arc-length \(\gamma\left(s\right)\in P\), for a fixed \(s\in P\), we can define the curvature vector \(A\left(s\right):=\nabla_T T\left(s\right)\in P_{\gamma\left(s\right)}^{\perp}\), where \(T\) is the unit tangent vector field of the curve \(P\), i.e. \(T=\gamma^{\prime}\left(s\right)\). Then we have \(\kappa_1\left(0\right):=\kappa_u\left(0;s\right)=\langle A\left(s\right),u\rangle\). Therefore, when integrating the principal curvature of \(P\) over \(\mathbb{S}^{n-2}\left(1\right)\), we obtain
\begin{equation}
\int_{\mathbb{S}^{n-2}}\kappa_1\left(0\right)\operatorname{du}=\int_{\mathbb{S}^{n-2}}\kappa_u\left(s\right)\operatorname{du}=\int_{\mathbb{S}^{n-2}}\langle A\left(s\right),u\rangle\operatorname{du}.
\end{equation}
Since \(A\left(s\right)\) is fixed with respect to the angular integration, we get
\begin{equation}
\int_{\mathbb{S}^{n-2}}\langle A\left(s\right),u\rangle\operatorname{du}=\Bigg\langle A\left(s\right),\int_{\mathbb{S}^{n-2}}u\operatorname{du} \Bigg\rangle.
\end{equation}
However, the integral over \(u\) is identically zero. This follows from the antipodal symmetry, i.e. for every \(u\in\mathbb{S}^{n-2}\), the point \(-u\) also belongs to \(\mathbb{S}^{n-2}\) and the measure is invariant under \(u\mapsto -u\). Hence, 
\begin{equation}
\int_{\mathbb{S}^{n-2}}\kappa_1\left(0\right)\operatorname{du}=0,
\end{equation}
so the second integral in (\ref{eq:38}) vanishes. 

This gives us
\begin{equation}
\vert T\left(P,a\right)\vert=\frac{\vert P\vert\vert\mathbb{S}^{n-2}\left(1\right)\vert}{\left(\sqrt{K^X}\right)^{n-2}}\int_0^a\operatorname{sin}^{n-2}\left(\sqrt{K^X}r\right)\operatorname{cos}\left(\sqrt{K^X}r\right)\operatorname{dr}.
\end{equation}

This integral can be solved by utilizing the substitution $v=\operatorname{sin}\left(\sqrt{K^X}r\right)$, so $\operatorname{d}v=\sqrt{K^X}\operatorname{cos}\left(\sqrt{K^X}r\right)\operatorname{dr}$. At the end, we obtain
\begin{equation*}
\int_0^a\operatorname{sin}^{n-2}\left(\sqrt{K^X}r\right)\operatorname{cos}\left(\sqrt{K^X}r\right)=\frac{\operatorname{sin}^{n-1}\left(\sqrt{K^X}a\right)}{\left(n-1\right)\sqrt{K^X}}.
\end{equation*}

Hence 
\begin{equation}\label{eq:39}
\vert T\left(P,a\right)\vert=\frac{\vert P\vert\vert\mathbb{S}^{n-2}\left(1\right)\vert\operatorname{sin}^{n-1}\left(\sqrt{K^X}a\right)}{\left(n-1\right)\left(\sqrt{K^X}\right)^{n-1}}.
\end{equation}

\vspace{0.2cm}
Since the exponential map $\operatorname{exp}^{\perp}$ is an isometry of a neighborhood of $P\in\nu P$ onto a neighborhood of $P\in X$, we have $\vert\partial T\left(P,a\right)\vert=\frac{\partial}{\partial a}\vert T\left(P,a\right)\vert$,  \cite[p.~42]{gray2003tubes}. This gives us
\begin{equation}\label{eq:40}
\vert\partial T\left(P,a\right)\vert=\frac{\partial}{\partial a}\vert T\left(P,a\right)\vert=\frac{\vert P\vert\vert \mathbb{S}^{n-2}\left(1\right)\vert}{\left(\sqrt{K^X}\right)^{n-2}}\operatorname{sin}^{n-2}\left(\sqrt{K^X}a\right)\operatorname{cos}\left(\sqrt{K^X}a\right)
\end{equation}

This enables us to compute the upper bound of $h\left(T\left(P,a\right)\right)$ when $K^X>0$ as
\begin{equation}\label{eq:41}
h\left(T\left(P,a\right)\right)\leq\left(n-1\right)\sqrt{K^X}\operatorname{sin}^{n-2+1-n}\left(\sqrt{K^X}a\right)\operatorname{cos}\left(\sqrt{K^X}a\right)=\left(n-1\right)\sqrt{K^X}\operatorname{cotg}\left(\sqrt{K^X}a\right).
\end{equation}

Considering the opposite case with the sectional curvature satisfying $K^X<0$, the only required change is to replace the usual $\operatorname{cos}$ and $\operatorname{sin}$ functions with their hyperbolic versions and also take $K^X$ in the absolute value. The rest of the procedure remains identical as in the previous case. 

Therefore, we obtain the following upper bound
\begin{equation}\label{eq:42}
  {h\left(T(P,a)\right)\leq\left(n-1\right)\sqrt{\vert K^X\vert}\operatorname{cotgh}\left(\sqrt{\vert K^X\vert}a\right)}. 
\end{equation} 

\subsection{The lower bound}
\vspace{5px}

In this section we compute the lower bound via the divergence-calibration argument, which is based on the divergence theorem. The vector field used below is not the radial field associated with the distance function \(x\mapsto\operatorname{dist}\left(x,P\right)\). Instead, we use a moving-geodesic-sphere field, such that for sufficiently small \(a\), points of the tube are represented as lying on geodesic spheres of fixed radius \(a\) centered along \(P\), and the calibrating vector field is chosen as the outward radial unit field of these spheres. Upon that, we shall demonstrate that \(T\left(P,a\right)\) is self-Cheeger.

Under the assumption that we have a compact smooth Riemannian manifold $M$ and the given submanifold $N$ has a smooth boundary $\partial N$ with compact closure, it is not obligatory to deal with only smooth vector fields. Let us then define the following class of vector fields 

\begin{equation*}
V_{\operatorname{div}}\left(N\right):=\{V\in L^{\infty}\left(N;TM\right)\hspace{0.1cm}\vert\hspace{0.1cm} \operatorname{div}\left(V\right)\in L^1\left(N\right)\}.
\end{equation*}

Since the admissible competitors in the definition of the Cheeger constant are smooth compact \(n\)-dimensional submanifolds \(N\subset T\left(P,a\right)\) with smooth boundary, it is enough for our purposes to use the classical divergence theorem for smooth vector fields. Below, we shall construct a smooth vector field $V\in L^{\infty}\left(N;TM\right)$, with a finite supremum norm. In this setup, the divergence is naturally locally integrable, i.e. contained in \(L^1\). More details about integrability and validity of the divergence theorem, from the perspective of measure theory, can be found in \cite{evans2018measure}. 

Now, let us take $\mu$ to be the outward unit normal vector of $\partial N$ and some $V\in V_{\operatorname{div}}\left(N\right)\hspace{0.1cm}$, then we can apply the divergence theorem   

\begin{equation*}
\inf_{N}\operatorname{div(V)}\vert N\vert\leq \int_{N}\operatorname{div(V)}=\int_{\partial N}\langle V,\mu\rangle\leq \lVert V \rVert_{\infty} \vert\partial N\vert,
\end{equation*}
which holds for non-zero $V$ and the infimum is considered over all submanifolds with smooth boundary and compact closure.  

By rearranging all terms within the first and the last inequality, we get
\begin{equation}\label{eq:43}
\frac{\vert\partial N\vert}{\vert N\vert}\geq\frac{\operatorname{{inf_N}}\operatorname{div(V)}}{\lVert V \rVert_{\infty}}.
\end{equation}
\textnormal{The right hand side of (\ref{eq:43}) provides a universal lower bound for the \textit{Cheeger constant} $h$, since the infimum in the definition of $h$, runs over all compact admissible smooth Riemannian submanifolds with smooth boundary.}  

\vspace{0.1cm}
In what follows, we will focus on the appropriate parametrization of the curved tube and on the choice of a suitable vector field \(V\), which continues with the computation of its divergence $\operatorname{div}\left(V\right)$.

Let the underlying curve $P$ be parameterized by arc-length $\gamma$, where
\begin{equation}
\gamma:\mathbb{R}/ L\mathbb{Z}\longrightarrow P,
\end{equation}
with \(L=\vert P\vert\), $p=\gamma\left(s\right)$, and write $T\left(s\right)=\dot\gamma\left(s\right)$ for the unit tangent. For \(p=\gamma\left(s\right)\), define the open forward unit hemisphere in \(T_pX\) by
\begin{equation}
\mathbb{S}_{p,+}^{n-1}:=\{\xi\in T_pX\hspace{0.1cm}\vert\hspace{0.1cm}\vert\xi\vert=1,\quad\langle\xi,T\left(s\right)\rangle>0\}.
\end{equation}
For \(a>0\) sufficiently small, the map
\begin{equation}\label{eq.63}
\Psi_a:\bigcup_{p\in P}\mathbb{S}_{p,+}^{n-1}\longrightarrow T^{\circ}\left(P,a\right),\quad \Psi_a\left(p,\xi\right)=\operatorname{exp}_p\left(a\xi\right),
\end{equation}
gives a smooth local parametrization of the interior \(\operatorname{Int}\left(T\left(P,a\right)\right)\) of the curved tube away from a negligible set. In other words, each point of the thin tube can be represented, locally and uniquely, as a point lying on a geodesic sphere of radius \(a\) centered at a suitable point \(p\in P\).

This is the spherical analog of the normal Fermi parametrization
\begin{equation}
\Phi\left(s,\zeta,r\right)=\operatorname{exp}_{\gamma\left(s\right)}\left(r\zeta\right),
\end{equation}
where \(\zeta\in \nu_{\gamma\left(s\right)}P\), \(\vert\zeta\vert =1\). The normal parametrization is adapted to the distance function from the curve \(P\). By contrast, the parametrization \(\Psi_a\) is adapted to geodesic spheres of fixed radius \(a\) centered on the curve. This is precisely what removes the unnecessary algebraic dependence on the curvature of the underlying curve \(P\) within the following arguments. 

However, notice that if a point \(x\in T\left(P,a\right)\) has two different representations \(x=\operatorname{exp}_p\left(a\xi\right)=\operatorname{exp}_q\left(a\widetilde{\xi}\right)\) with \(p\neq q\) and \(\xi\in T_pX,\hspace{0.1cm}\widetilde{\xi}\in T_qX\), then the formula may assign two different values to \(V\left(x\right)\), and the local parametrization would not be enough to define a global vector field \(V\) (such \(V\) would be multivalued). Here, the combination of the compactness of \(P\) with the fact that the radius \(a>0\) can be sufficiently small comes into consideration.

We shall show below that, after decreasing \(a\) if necessary, the moving-sphere parametrization \(\Psi_a\) is valid. Namely, for \(S^{n-1}_{p,+}=\{\xi\in T_pX:\ |\xi|=1,\ \langle \xi,\tau_p\rangle>0\}\), we will show that\(\Psi_a\), given by the relation (\ref{eq.63}), is a smooth diffeomorphism.

This is the curved analog of the elementary parametrization of a straight
Euclidean cylinder. In the model cylinder
\begin{equation}
    \mathcal{C}_a=\{(s,z)\in\mathbb R\times\mathbb R^{n-1}:|z|<a\},
\end{equation}
where every point \(\left(s,z\right)\in\mathcal{C_a}\) has a unique representation
\begin{equation}
    (s,z)=(\sigma,0)+a\xi,
    \qquad |\xi|=1,\quad \xi_1:=\langle\xi, e_1\rangle>0,
\end{equation}
where \(e_1\) is the positively oriented unit tangent vector to the axis \(\mathbb{R}\times\{0\}\). Writing 
\begin{equation}\label{eq.69}
    \xi=\xi_1e_1+\xi^{\perp},\quad \xi^{\perp}\in\{0\}\times\mathbb{R}^{n-1},
\end{equation}
we get
\begin{equation}
\xi^{\perp}=\frac{z}{a},\quad \xi_1=\sqrt{1-\frac{\vert z\vert^2}{a^2}},\quad \sigma=s-a\xi_1.
\end{equation}
Equivalently, one can write the full unit vector as
\begin{equation*}
\xi=\sqrt{1-\frac{\vert z\vert^2}{a^2}}e_1+\frac{z}{a}.
\end{equation*}

For a sufficiently thin curved tube, Fermi coordinates identify the tube with a uniform smooth perturbation of this model. Therefore, the compactness of \(P\) allows the required smallness of \(a\) to be chosen uniformly along the whole underlying curve \(P\). Thus, the above moving-sphere parametrization is really a diffeomorphism after decreasing \(a\), if necessary. This ensures us that for any \(a>0\) sufficiently small, every point \(x\) in the curved tube can be written uniquely in the form \(x=\operatorname{exp}_p\left(a\xi\right)\), where \(p\in P\), \(\vert\xi\vert=1\) and \(\langle\xi, \tau_p\rangle>0\). 

Therefore, the open cylinder is foliated by positively oriented hemispheres of Euclidean spheres of radius \(a\) whose centers lie on the axis. 

Now, let us define the tubular and injectivity radii. The tubular radius is defined as
\begin{equation}
\rho_{tub}\left(P\right):=\sup\{r>0\hspace{0.1cm}\vert\hspace{0.1cm}\operatorname{exp}^{\perp}:\{\left(p,v\right)\in\nu P \hspace{0.1cm}\vert\hspace{0.1cm}\vert\vert v\vert\vert<r\}\longrightarrow \operatorname{Int}\left(T\left(P,r\right)\right)\hspace{0.1cm}\textit{is a diffeomorphism} \}.
\end{equation}
This is an extrinsic quantity associated with the underlying curve \(P\), and it basically indicates how far we can move away from \(P\) before two different normal geodesics meet. Naturally, we have \(0<a<\rho_{tub}\left(P\right)\), so every point of the tube has a unique nearest point on \(P\). If \(a>\rho_{tub}\left(P\right)\), then two different normal geodesics may intersect, and the curved tube \(T\left(P,a\right)\) would self-intersect, which we avoid.

The injectivity radius is an intrinsic property of the ambient manifold. Here, we have to approach its definition from a more general perspective. Let \(M\) be a smooth Riemannian manifold. Then, for a point \(p\in M\), it is defined as
\begin{equation}
\rho_{inj}\left(p\right):=\sup\{r>0\hspace{0.1cm}\vert\hspace{0.1cm}\operatorname{exp}_p\hspace{0.1cm}is\hspace{0.1cm}injective\hspace{0.1cm}on\hspace{0.1cm}B_r\left(0\right)\subset T_pM\}.
\end{equation}
For the entire manifold \(M\), we can define the injectivity radius \(\rho_{inj}^M\left(M\right)\) by taking the infimum of \(\rho_{inj}\left(p\right)\) over all points \(p\in M\). In particular, this means that we are considering the ambient injectivity radius along \(P\subset M\).

When \(M=X\), the injectivity radius \(\rho_{inj}^X\left(M\right)\) simply becomes \(\rho_{inj}\left(X\right)\). In the Euclidean and hyperbolic cases, the exponential map is globally injective, so \(\rho_{inj}=\infty\) when \(K^X=0\) and \(K^X<0\). However, in the spherical case, every geodesic reaches the antipodal point. Therefore, the exponential map stops being injective there. If the sectional curvature is \(K^X>0\), then the injectivity radius satisfies \(\rho_{inj}\left(X\right)=\frac{\pi}{\sqrt{K^X}}\). Both radii control different phenomena. The injectivity radius controls whether geodesics from one fixed point remain unique. The tubular radius controls whether normal geodesics from different points of the curve remain disjoint. 

Thus, \(\rho_{tub}\left(P\right)\neq\rho_{inj}\left(X\right)\). Additionally, suppose that \(r<\rho_{tub}\left(P\right)\). Then the normal exponential map is injective on the radius-\(r\) normal bundle. Fix one point \(p\in P\). Restricting the normal exponential map to the fiber \(\nu_p P\subset T_pX\) still gives an injective map. But the restriction is simply the exponential map \(\operatorname{exp}_p\) restricted to the normal directions. Therefore, if \(r>\rho_{inj}\left(X\right)\), the exponential map at \(p\) has already lost injectivity before radius \(r\), so the normal exponential map cannot remain injective. Therefore, one always has
\begin{equation}
\rho_{tub}\left(P\right)\leq\rho_{inj}\left(X\right).
\end{equation}
However, this inequality is usually not sharp. The reason is that the tubular radius is often determined long before the ambient cut locus appears. As we have already mentioned, \(\rho_{inj}\left(\mathbb{R}^n\right)=\infty\), while for a closed curve, we have \(\rho_{tub}\left(P\right)<\infty\). In the spherical case, a small embedded circle has a tubular radius roughly equal to its focal radius or self-distance, both strictly smaller than \(\frac{\pi}{\sqrt{K^X}}\), so \(\rho_{tub}\left(P\right)<\rho_{inj}\left(X\right)\), when \(K^X>0\). Equality occurs, for example, when \(P\) is a complete geodesic in the Euclidean space, then \(\rho_{tub}\left(\mathbb{R}^n\right)=\rho_{inj}\left(\mathbb{R}^n\right)=\infty\). Generally speaking, equality occurs whenever the first obstruction to the injectivity of the normal exponential map comes from the ambient cut locus rather than from local points or intersections of distinct normal geodesics.  

In our case, since the normal exponential map is the restriction of the ambient exponential map to the normal bundle, the tubular radius always satisfies \(\rho_{tub}\left(P\right)\leq\rho_{inj}\left(X\right)\). Consequently, if \(a<\rho_{tub}\left(P\right)\), then automatically \(a<\rho_{inj}\left(X\right)\). 

The following proposition shows that the same description remains valid for the open curved tube, provided the radius is below the tubular radius and the ambient injectivity radius along \(P\). Under such circumstances for \(a\), the moving-sphere parametrization is a smooth diffeomorphism onto the interior of \(T\left(P,a\right)\) and does not develop an interior double point.

\newpage
\begin{prop}\label{prop.4}
Let \(P\subset X\) be a smooth embedded compact oriented curve. Let \(\gamma:\mathbb{R}/L\mathbb{Z}\longrightarrow X\) be a unit-speed parametrization of \(P\), and by \(T\left(s\right)=\dot{\gamma}\left(s\right)\) we denote the positively oriented unit tangent vector to \(P\). Let \(\rho_{tub}\left(P\right)>0\) be a tubular radius of \(P\) such that, for every \(0<a<\rho_{tub}\left(P\right)\), the normal exponential map 
\begin{equation*}
\Phi\left(s,v\right)=\operatorname{exp}_{\gamma\left(s\right)}\left(v\right),\quad v\in \nu_{\gamma\left(s\right)}P,\hspace{0.2cm}\vert v\vert<a
\end{equation*}
is a smooth diffeomorphism from the open disk bundle of radius \(a\) onto the open tube \(T^{\circ}\left(P,a\right)\). Next, assume that \(0<a<\min\{\rho_{tub}\left(P\right),\rho_{inj}\left(X\right)\}\). Then the moving-sphere parametrization
\begin{equation}
\Psi_a:\mathcal{H}_{+}:=\bigcup_{p\in P}\mathbb{S}^{n-1}_{p,+}\longrightarrow T^{\circ}\left(P,a\right),\quad \Psi_a\left(p,\xi\right)=\operatorname{exp}_p\left(a\xi\right),
\end{equation}
is a smooth diffeomorphism.
\end{prop}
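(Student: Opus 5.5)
The plan is to turn the question ``how many ways can $x$ be written as $\operatorname{exp}_p\left(a\xi\right)$ with $\xi\in\mathbb{S}^{n-1}_{p,+}$'' into a one-variable problem on the circle $\mathbb{R}/L\mathbb{Z}$. Fix $x\in T^{\circ}\left(P,a\right)$ and set
\begin{equation*}
g_x\left(s\right):=\operatorname{dist}\left(x,\gamma\left(s\right)\right)-a .
\end{equation*}
Since $a<\rho_{inj}\left(X\right)$, the relation $x=\operatorname{exp}_{\gamma\left(s\right)}\left(a\xi\right)$ with $\vert\xi\vert=1$ holds exactly when $g_x\left(s\right)=0$, and then $\xi$ is uniquely determined as the initial unit direction of the minimizing geodesic from $\gamma\left(s\right)$ to $x$. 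At such a zero, $\operatorname{dist}\left(x,\cdot\right)$ is smooth near $\gamma\left(s\right)$ because $0<a<\rho_{inj}$, and the first variation formula gives $g_x^{\prime}\left(s\right)=-\langle\xi,T\left(s\right)\rangle$. Hence the forward condition $\langle\xi,T\left(s\right)\rangle>0$ says precisely that $s$ is a transversal down-crossing of $g_x$. Proposition~\ref{prop.4} therefore reduces to the statement that, for every $x$ in the open tube, $g_x$ has exactly one down-crossing on $\mathbb{R}/L\mathbb{Z}$, together with the corresponding statement on the image.

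\textbf{The image lies in the open tube.} Take $\left(p,\xi\right)\in\mathcal{H}_+$ and $x=\operatorname{exp}_p\left(a\xi\right)$. We have $\operatorname{dist}\left(x,P\right)\leq a$. Since $g_x^{\prime}\neq 0$ at $p$, we get $\operatorname{dist}\left(x,\gamma\left(s\right)\right)<a$ for $s$ slightly beyond $p$, so $\operatorname{dist}\left(x,P\right)<a$ and $x\in T^{\circ}\left(P,a\right)$.

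\textbf{Existence of a down-crossing (surjectivity).} Because $x$ lies in the open tube, $\min g_x<0$. Provided $a$ is small compared with the length and the self-distance of $P$, some $s$ has $g_x\left(s\right)>0$; I would justify this by comparison with the Fermi picture, since points of $P$ at arc-distance larger than $2a$ from the foot point of $x$ are at distance larger than $a$ from $x$. A continuous function on the circle that takes both signs and is strictly decreasing wherever it vanishes must have a down-crossing.

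\textbf{Uniqueness of the down-crossing (injectivity) — the main obstacle.} The hardest step is to show that the open set $I_x=\{s: g_x\left(s\right)<0\}$ is a single arc. On a circle, the number of down-crossings equals the number of components of $I_x$. Once $I_x$ is connected, there is exactly one down-crossing, and it is transversal.

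I would prove connectedness in two parts.
\begin{enumerate}
\item Using the embeddedness of $P$ and $a<\rho_{tub}\left(P\right)$, the whole set $I_x$ lies in the arc $\vert s-s_0\vert<Ca$, where $s_0$ is the Fermi foot point of $x$. This is the ``uniform perturbation of the straight cylinder'' picture described before the proposition, made quantitative using the compactness of $P$.
\item On that arc, $f\left(s\right)=\frac12\operatorname{dist}^2\left(x,\gamma\left(s\right)\right)$ is strictly convex. Its second derivative is $\operatorname{Hess}\left(\frac12 d_x^2\right)\left(T,T\right)+\langle\nabla d_x^2/2,\nabla_TT\rangle$. The Hessian comparison in the space form gives a first term $\geq\sqrt{K^X}\,d\operatorname{cotg}\left(\sqrt{K^X}d\right)$ (respectively $\equiv 1$, or the hyperbolic analogue), which is close to $1$. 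The curvature term is bounded by $\sup\vert\nabla_TT\vert\cdot a$.
\end{enumerate}
Shrinking $a$ further if necessary — the paper already allows this — makes $f^{\prime\prime}>0$. A strictly convex function has a connected sublevel set, so $I_x$ is an arc. This step is where I am least sure the stated hypotheses $a<\min\{\rho_{tub},\rho_{inj}\}$ suffice on their own; I expect an extra smallness condition of the type $a\sup\vert\nabla_TT\vert<1$ to be needed.

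\textbf{Smoothness of the inverse.} Set $F\left(x,s\right)=g_x\left(s\right)$. At the unique down-crossing $s\left(x\right)$ we have $\partial_sF=-\langle\xi,T\rangle<0$. The implicit function theorem then makes $x\mapsto s\left(x\right)$ smooth, and $\xi\left(x\right)=\operatorname{exp}_{\gamma\left(s\left(x\right)\right)}^{-1}\left(x\right)/a$ is smooth because $a<\rho_{inj}$. This inverse is smooth and two-sided, so $\Psi_a$ is a diffeomorphism.
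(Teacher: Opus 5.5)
Your reduction is sound and follows the same skeleton as the paper. By the first variation formula, forward representations of $x$ are exactly the zeros of $g_x$ with $g_x^{\prime}<0$. The image lies in $T^{\circ}(P,a)$, and the implicit function theorem gives the smooth inverse.

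The gap is the central step: showing that $I_x$ is a single proper arc with transversal endpoints. Part (1), localizing $I_x$ to $\vert s-s_0\vert<Ca$, is only asserted. Part (2) genuinely requires a smallness condition that is not contained in $a<\min\{\rho_{tub}(P),\rho_{inj}(X)\}$. Here is a counterexample to the convexity:
\begin{itemize}
\item[] Take a geodesic circle of radius $R$ in $\mathbb{H}^2$. Then $\rho_{tub}(P)=R$ and $\vert\nabla_TT\vert=\coth R$.
\item[] Let $\xi$ make the angle $\pi/6$ with $T(s)$, tilted towards the centre, and put $x=\exp_{\gamma(s)}(d\xi)$.
\item[] Then $f^{\prime\prime}(s)=\frac{3}{4}+\frac{d}{4}\coth d-\frac{d}{2}\coth R$.
\item[] For $R=10$, $d=9.5$, $a=9.9$ this is about $-1.6$, even though $s\in I_x$ and $a<\rho_{tub}(P)$.
\end{itemize}
So your route can at best prove the proposition for a smaller, unspecified range of $a$.

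The existence step has the same defect: the arc-distance-$2a$ claim and the smallness relative to the self-distance are unproven extra inputs. Also, ``strictly decreasing wherever it vanishes'' is false, since backward representations are up-crossings. What you actually need is that the left endpoint of a component of $I_x$ is a \emph{transversal} zero, and outside the convexity route you never show this.

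The missing idea needs no extra smallness: convert critical points of $F_x(s)=d(x,\gamma(s))^2$ into preimages under $\exp^{\perp}$. The paper uses this for transversality in the proof of Proposition~\ref{prop.4}, and writes out the component count in the proof of Proposition~\ref{prop.5}.
\begin{enumerate}
\item Suppose $F_x^{\prime}(s_1)=0$ and $F_x(s_1)\le a^2$. The formula $F_x^{\prime}(s)=-2\langle\exp^{-1}_{\gamma(s)}(x),T(s)\rangle$ shows that $\exp^{-1}_{\gamma(s_1)}(x)$ is a normal vector of length at most $a$ that is mapped to $x$.
\item The map $\exp^{\perp}$ is injective on the disk bundle of any radius $a^{\prime}\in(a,\rho_{tub}(P))$, and $x$ already has its Fermi preimage $(s_0,v_0)$. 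This forces $s_1=s_0$, so $s_0$ is the only critical point of $F_x$ in $\{F_x\le a^2\}$.
\item Hence $I_x\neq\mathbb{R}/L\mathbb{Z}$. Otherwise its maximum and minimum would be two distinct critical points, or $F_x$ would be constant. The paper passes over this case in the compact setting.
\item Every component of $I_x$ contains an interior minimum of $F_x$, hence contains $s_0$. So $I_x=(s_-,s_+)$ is a single arc.
\item By step 2, $F_x^{\prime}(s_\pm)\neq 0$, so $F_x^{\prime}(s_-)<0<F_x^{\prime}(s_+)$.
\end{enumerate}
Any zero of $g_x$ with $g_x^{\prime}<0$ is the left endpoint of a component of $I_x$. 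Therefore $s_-$ is the unique forward centre, and the rest of your argument goes through unchanged.
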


\hspace{-0.5cm}\textbf{Proof of proposition \ref{prop.4}.} We divide the following proof into several steps. First, we show that the moving-sphere parametrization sends the positive hemisphere bundle into the open tube. 

Let \(p\in P\) and \(\tau_p\in T_pP\) be the positively oriented unit tangent vector to \(P\) at \(p\). Next, let \(\xi\in T_pX\), with \(\vert\xi\vert=1\) and \(\langle\xi,\tau_p\rangle>0\). Set \(x=\operatorname{exp}_p\left(a\xi\right)\). Since \(a<\rho_{inj}\left(P\right)\), the geodesic segment 
\begin{equation}
r\mapsto\operatorname{exp}_p\left(r\xi\right),\quad 0\leq r\leq a,
\end{equation}
is minimizing and depends smoothly on the initial vector. Hence, \(d\left(p,x\right)=a\). 

Now, let \(\gamma\left(t\right)\) be a unit-speed parametrization of \(P\) near the point \(p\), chosen so that
\begin{equation}
\gamma\left(0\right)=p,\quad\dot{\gamma}\left(0\right)=\tau_p.
\end{equation}
Consider the function
\begin{equation}
F\left(t\right)=\frac{1}{2}d\left(\gamma\left(t\right),x\right)^2.
\end{equation}
Using the first variation formula for the squared distance, we obtain
\begin{equation}
F^{\prime}\left(0\right)=-a\langle\xi,\tau_p\rangle.
\end{equation}
Since \(\langle\xi,\tau_p\rangle>0\), we get
\begin{equation}
F^{\prime}\left(0\right)<0.
\end{equation}
Therefore, for all sufficiently small positive \(t\), we have
\begin{equation}
d\left(\gamma\left(t\right),x\right)<a.
\end{equation}
Because \(\gamma\left(t\right)\in P\), this implies
\begin{equation}
\operatorname{dist}\left(x,P\right)\leq d\left(\gamma\left(t\right),x\right)<a.
\end{equation}
Thus \(x\in T^{\circ}\left(P,a\right)\). Consequently, we have \(\Psi_a\left(\mathcal{H}_{+}\right)\subset T^{\circ}\left(P,a\right)\).

Now, we focus on proving the surjectivity and uniqueness. 

Let us fix an arbitrary point \(q\in T^{\circ}\left(P,a\right)\). Since \(a<\rho_{tub}\left(P\right)\), then every such point has a unique Fermi representation
\begin{equation}\label{eq.82}
q=\operatorname{exp}_{\gamma\left(s_0\right)}\left(v_0\right),\quad v_0\in \nu_{\gamma\left(s_0\right)}P,\hspace{0.2cm}\vert v_0\vert=r<a.
\end{equation}
Equivalently, \(\gamma\left(s_0\right)\) is the unique nearest point of \(P\) to \(q\), and \(\operatorname{dist}\left(q,P\right)=r<a\).

Now, define the squared distance function along \(P\) by 
\begin{equation}
F_q\left(s\right)=d\left(q,\gamma\left(s\right)\right)^2.
\end{equation}
Since \(a<\rho_{inj}\left(P\right)\), the function \(F_q\) is smooth at every point \(s\) satisfying
\begin{equation}
d\left(q,\gamma\left(s\right)\right)\leq a.
\end{equation}
Consider the open sublevel set \(A_q=\{s\in\mathbb{R}/L\mathbb{Z}\hspace{0.1cm}\vert\hspace{0.1cm}F_q\left(s\right)<a^2\}\). This set is nonempty because it contains the point \(s_0\). Moreover, since \(P\) is connected embedded closed curve and we assume that \(a<\rho_{tub}\left(P\right)\), the set \(A_q\) is necessarily a single connected open arc of the oriented circle \(\mathbb{R}/L\mathbb{Z}\). Geometrically, \(A_q\) records those centers whose geodesic balls of radius \(a\) contain the fixed point \(q\). Because the tube is embedded, the curve cannot leave this family of balls and re-enter it elsewhere without violating the uniqueness of the tubular neighborhood. 

Now, let us denote the two boundary points of \(A_q\) by \(s_-\) and \(s_+\), ordered so that \(A_q\) is entered at \(s_-\) and exited at \(s_+\) with respect to the positive orientation of \(P\).

At both points, we have
\begin{equation}
F_q\left(s_{\pm}\right)=a^2,
\end{equation}
or, equivalently, 
\begin{equation}
d\left(q,\gamma\left(s_{\pm}\right)\right)=a.
\end{equation}

At this stage, we move to proving that the intersections with the level set \(F_q=a^2\) are transverse. 

Suppose that for one of these boundary points, say \(s_*\), we have
\begin{equation}
F^{\prime}_q\left(s_*\right)=0.
\end{equation}
Then, by the first variation formula, the geodesic from \(\gamma\left
(s_*\right)\) to \(q\) is orthogonal to \(P\) at \(\gamma\left(s_*\right)\). Since 
\begin{equation}
d\left(q,\gamma\left(s_*\right)\right)=a
\end{equation}
and \(a<\rho_{tub}\left(P\right)\), this gives us a normal representation of \(q\) by a normal vector of length \(a\). On the other hand, \(q\) already has a Fermi representation \(q=\operatorname{exp}_{\gamma\left(s_0\right)}\left(v_0\right)\), with \(\vert v_0\vert=r<a\). These two normal representations are distinct, because \(s_0\) lies in the interior of \(A_q\), whereas the point \(s_*\) lies on its boundary. This contradicts injectivity of the normal exponential map on the normal disk bundle of the radius \(\rho_{tub}\left(P\right
)\). Therefore,
\begin{equation}
F^{\prime}_q\left(s_{\pm}\right)\neq 0.
\end{equation}
Since \(A_q\) is entered at \(s_-\), we have 
\begin{equation}
F^{\prime}_q\left(s_-\right)<0,
\end{equation}
and since \(A_q\) is exited at \(s_+\), we have
\begin{equation}
F^{\prime}_q\left(s_+\right)>0.
\end{equation}

For each boundary point \(s\in\{s_-,s_+\}\), let \(\xi\left(s\right)\in T_{\gamma\left(s\right)}X\) be the initial unit vector of the unique minimizing geodesic from \(\gamma\left(s\right)\) to \(q\). Thus \(q=\operatorname{exp}_{\gamma\left(s\right)}\left(a\xi\left(s\right)\right)\).
The first variation formula yields
\begin{equation}
F^{\prime}_q\left(s\right)=-2a\langle\xi\left(s\right),T\left(s\right)\rangle.
\end{equation}
Therefore,
\begin{equation}
F^{\prime}_q\left(s\right)<0\iff\langle\xi\left(s\right),T\left(s\right)\rangle>0,
\end{equation}
and
\begin{equation}\label{eq.94}
F^{\prime}_q\left(s\right)>0\iff\langle\xi\left(s\right),T\left(s\right)\rangle<0.
\end{equation}
Consequently, among the two boundary points \(s_-\) and \(s_+\), exactly one gives a positively oriented direction. Namely, 
\begin{equation}
\langle\xi\left(s_-\right),T\left(s_-\right)\rangle>0,
\end{equation}
whereas
\begin{equation}
\langle\xi\left(s_+\right),T\left(s_+\right)\rangle<0.
\end{equation}
This implies that every point \(q\in T^{\circ}\left(P,a\right)\) has at least one positively oriented moving-sphere representation
\begin{equation}
q=\operatorname{exp}_{\gamma\left(s_-\right)}\left(a\xi\left(s_-\right)\right),\quad \vert\xi\left(s_-\right)\vert=1, \hspace{0.2cm}\langle\xi\left(s_-\right),T\left(s_-\right)\rangle>0.
\end{equation}

Now, we shall proceed to prove the uniqueness. Suppose that \(q\) has another positively oriented representation
\begin{equation}
q=\operatorname{exp}_{\gamma\left(\sigma
\right)}\left(a\widetilde{\xi}\right),\quad \vert\widetilde{\xi}\vert=1,\hspace{0.2cm}\langle\widetilde{\xi},T\left(\sigma\right)\rangle>0.
\end{equation}
Then 
\begin{equation}
F_q\left(\sigma\right)=a^2.
\end{equation}
Moreover, by using the first variation formula, we obtain
\begin{equation}
F^{\prime}_q\left(\sigma\right)=-2a\langle\widetilde{\xi}, T\left(\sigma\right)\rangle<0.
\end{equation}
Therefore, \(\sigma\) is an entering boundary point of the sublevel set \(A_q\). However, the set \(A_q\) is a single connected open interval within the oriented circle, and therefore, it has exactly one entering boundary point, namely \(s_-\). Hence,
\begin{equation}
\sigma=s_-.
\end{equation}
Since \(a<\rho_{inj}\left(P\right)\), the minimizing geodesic from \(\gamma\left(s_-\right)\) to \(q\) is unique. Therefore,
\begin{equation}
\widetilde{\xi}=\xi\left(s_-\right).
\end{equation}
This proves the uniqueness of the positively oriented moving-sphere parametrization, so
\begin{equation}
\Psi_a:\mathcal{H}_{+}\longrightarrow T^{\circ}\left(P,a\right)
\end{equation}
is bijective.

The last remaining step consists of proving the smoothness of the inverse. 

Let \(q\in T^{\circ}\left(P,a\right)\), and let \(s\left(q\right)\) denote the unique positively oriented center constructed in the previous step. It is characterized by
\begin{equation}
F_q\left(s\left(q\right)\right)=a^2,\quad F^{\prime}_q\left(s\left(q\right)\right)<0.
\end{equation}
At this point, we have
\begin{equation}
\restr{\frac{\partial}{\partial s}\left(F_q\left(s\right)-a^2\right)}{s=s\left(q\right)}=F^{\prime}_q\left(s\left(q\right)\right)\neq 0.
\end{equation}
Applying the implicit function theorem, the positively oriented center \(s\left(q\right)\) locally depends smoothly on \(q\). Since the positively oriented center is unique, these local smooth functions agree on overlaps. Thus \(q\mapsto s\left(q\right)\) is globally smooth. 

Furthermore, because \(a<\rho_{inj}\left(P\right)\), the inverse exponential map
\begin{equation}
\operatorname{exp}_{\gamma\left(s\left(q\right)\right)}^{-1}\left(q\right)
\end{equation}
is smooth for all relevant pairs \(\left(\gamma\left(s\left(q\right)\right),q\right)\). Therefore,
\begin{equation}
\xi\left(q\right)=\frac{1}{a}\operatorname{exp}_{\gamma\left(s\left(q\right)\right)}^{-1}\left(q\right)
\end{equation}
depends smoothly on \(q\).
This implies that the inverse map
\begin{equation}
q\mapsto\left(\gamma\left(s\left(q\right)\right),\xi\left(q\right)\right)
\end{equation}
is smooth. 

In summary, the map \(\Psi_a\) itself is smooth because it is the restriction of the smooth exponential map. Since \(\Psi_a\) is bijective and has a smooth inverse, it is directly a smooth diffeomorphism. 

This completes the proof of proposition \ref{prop.4}.

\vspace{0.2cm}
Equivalently, proposition \ref{prop.4} states that for every point \(x\in T^{\circ}\left(P,a\right)\), there exists a unique pair \(\left(p,\xi\right)\in\mathcal{H}_{+}\) such that
\begin{equation}
x=\operatorname{exp}_p\left(a\xi\right),\quad \vert\xi\vert=1,\hspace{0.2cm}\langle\xi, \tau_p\rangle>0.
\end{equation}
Moreover, this pair depends smoothly on \(x\).

Notice that it is important to distinguish between the open tube and the closed tube in the following sense. The ordinary Fermi parametrization describes the open tube via normal disks. A point of the open tube is written as \(x=\operatorname{exp}_{\gamma\left(s\right)}\left(v\right)\), where \(v\in \nu_{\gamma\left(s\right)}P\) and \(\vert v\vert<a\). Thus, the Fermi parametrization uses the normal disk of radius \(a\) over each point of the underlying curve \(P\). 

On the other hand, the moving-sphere parametrization describes the same open tube by positively oriented open hemispheres, with a point being written as \(x=\operatorname{exp}_p\left(a\xi\right)\), where \(p\in P\), \(\vert\xi\vert=1\) and \(\langle\xi,\tau_p\rangle>0\). It is clear that these two parametrizations use different geometric slices of the same object. The Fermi slices are normal disks, while the moving-sphere slices are positively oriented hemispheres of geodesic spheres of radius \(a\) centered at points of \(P\). The crucial point is that, for the open curved tube, the moving-sphere parametrization cannot develop an interior double point before the Fermi projection ceases to be unique. 

In order to justify this, assume that the normal exponential map is injective on the whole open normal disk bundle of radius \(a\). Equivalently, every point of the open tube has a unique Fermi representation. When we examine the considerations between the relations (\ref{eq.82}) and (\ref{eq.94}), it is clear that an interior double point of the moving-sphere parametrization would potentially force a second Fermi projection. However, precisely because of the considerations between (\ref{eq.82}) and (\ref{eq.94}), this does not happen. A second Fermi projection is excluded by the injectivity of the normal exponential map. Therefore, for the open curved tube, the positively oriented moving-sphere parametrization cannot fail before the Fermi parametrization fails. 

The situation is different for the closed tube. In the Fermi parametrization, the boundary of the curved tube is given by \(\vert v\vert=a\). On the other hand, in the case of the moving-sphere parametrization, the boundary corresponds to the equator of the moving sphere, namely to the condition \(\langle\xi,\tau_p\rangle=0\). Thus, the open hemisphere condition \(\langle\xi, \tau_p\rangle>0\) degenerates exactly at the boundary. 

This degeneracy already appears in the straight Euclidean cylinder. In that model, the transition from moving-sphere coordinates to Fermi coordinates has the form 
\begin{equation}
z=a\xi^{\perp},
\end{equation}
and
\begin{equation}
s=\sigma+a\sqrt{1-\vert\xi^{\perp}\vert^2}, \quad \vert\xi^{\perp}\vert<1.
\end{equation}
The condition \(\vert\xi^{\perp}\vert<1\) corresponds to the open hemisphere. The inverse transformation is given by
\begin{equation}
\xi^{\perp}=\frac{z}{a},
\end{equation}
and
\begin{equation}
\sigma=s-a\sqrt{1-\frac{\vert z\vert^2}{a^2}},
\end{equation}
where \(\xi^{\perp}\) can be understood as the normal component of the unit vector \(\xi\) that is given by (\ref{eq.69}). This inverse is smooth for all \(\vert z\vert<a\), which is precisely the open curved tube. However, as \(\vert z\vert\rightarrow a\), the term \(\sqrt{1-\frac{\vert z\vert^2}{a^2}}\) loses smoothness. Its derivative becomes singular at \(\vert z\vert=a\).

Geometrically, this is the fold at the equator of the moving hemisphere. The equator corresponds exactly to the boundary of the Fermi disk. 

Therefore, the open tube admits a genuine diffeomorphic relation between the Fermi disk parametrization and the positively oriented moving-hemisphere parametrization. The closed tube does not have the same smooth diffeomorphic relation, because the boundary (equator) produces a degeneracy of the inverse. 

In summary, we have the following cases:
\begin{itemize}
    \item \underline{Open tube:} no earlier interior failure of the hemisphere parametrization
    \item \underline{Closed tube:} boundary degeneracy occurs at the equator of the moving hemisphere
\end{itemize}
Therefore, the moving-sphere parametrization appears consistent on the open tube, and its unavoidable degeneracy emerges only when one tries to include the boundary.

\vspace{0.2cm}
Based on proposition \ref{prop.4}, the following two things are guaranteed:
\begin{itemize}
    \item \(V\) is well defined, because any form of \(V\left(x\right)\) does not depend on a choice of \(p\) or \(\xi\),
    \item \(V\) is smooth inside the curved tube, so the map \(\Psi_a\left(p,\xi\right)=\operatorname{exp}_p\left(a\xi\right)\) is a diffeomorphism, and \(\left(p,\xi\right)\) depends smoothly on \(x\).
\end{itemize}

Now we can define the vector field \(V\) explicitly. If
\begin{equation}
x=\Psi_a\left(p,\xi\right)=\operatorname{exp}_p\left(a\xi\right),
\end{equation}
then we can set
\begin{equation}\label{eq.106}
V\left(x\right):=\restr{\frac{d}{dr}}{r=a}\operatorname{exp}_p\left(r\xi\right).
\end{equation}
Equivalently, the vector field \(V\left(x\right)\) is the outward radial unit vector at \(x\) for the geodesic sphere of radius \(a\) centered at \(p\in P\).

Since the curve \(r\mapsto\operatorname{exp}_p\left(r\xi\right)\) is parameterized by arc-length, we have
\begin{equation}
\vert V\left(x\right)\vert=1,
\end{equation}
whenever \(V\) is defined smoothly. Therefore, \(\left\| V\right\|_{\infty}=1\). Now, it remains to compute the divergence \(\operatorname{div}\left(V\right)\). Fix the point \(p\in P\) and let us consider the hypersurface 
\begin{equation}
\Sigma_p:=\{\operatorname{exp}_p\left(a\xi\right)\hspace{0.1cm}\vert\hspace{0.1cm}\xi\in\mathbb{S}_{p,+}^{n-1}\}.
\end{equation}
This represents an open piece of the geodesic sphere of radius \(a\) centered at \(p\). Along \(\Sigma_p\), the vector field \(V\) is exactly the outward radial unit normal to that geodesic sphere. 

Let \(x\in\Sigma_p\) and choose an orthonormal basis \(\{e_1,...e_{n-1}\}\) of the tangent space \(T_x\Sigma_p\). We can complete it to an orthonormal basis of \(T_x X\) by adding the element \(e_n=V\left(x\right)\). Then the divergence is given by
\begin{equation}\label{eq.52}
\operatorname{div}\left(V\right)=\sum_{j=1}^{n-1}\langle \nabla_{e_j}V,e_j\rangle+\langle\nabla_VV,V\rangle.
\end{equation}
Since the vector field \(V\) has unit length, we have \(\vert V\vert^2=\langle V,V\rangle=1\). Differentiating this identity in the direction of any arbitrary smooth vector field \(\widetilde{V}\) yields
\begin{equation}
0=\widetilde{V}\left(\vert V\vert^2\right)=\widetilde{V}\langle V,V\rangle=\langle\nabla_{\widetilde{V}}V,V\rangle+\langle V,\nabla_{\widetilde{V}}V\rangle=2\langle\nabla_{\widetilde{V}}V,V\rangle.
\end{equation}
Therefore, \(\langle\nabla_{\widetilde{V}}V,V\rangle=0\) for any \(\widetilde{V}\). In particular, taking \(\widetilde{V}=V\) gives us
\begin{equation}
\langle \nabla_VV,V\rangle=0.
\end{equation}
Thus, the \(V\)-direction does not contribute to the divergence. 

Plugging this back into the divergence relation (\ref{eq.52}), we obtain
\begin{equation}
\operatorname{div}\left(V\right)=\sum_{j=1}^{n-1}\langle \nabla_{e_j}V,e_j\rangle,
\end{equation}
so just the first term remains.

For technical purposes, we define the functions $S\left(r\right)$ and $C\left(r\right)$ as solutions to the following differential equations
\begin{equation}\label{eq:45}
S^{''}\left(r\right)+K^XS\left
(r\right)=0,
\end{equation}
\begin{equation}\label{eq:46}
C^{''}\left(r\right)+K^XC\left
(r\right)=0,
\end{equation}
with the different initial conditions $S\left(0\right)=0$, $S{'}\left(0\right)=1$ and $C\left(0\right)=1$, $C{'}\left(0\right)=0$. The explicit form of $S\left(r\right)$ is given by 
\begin{equation*}
S\left(r\right)=
\begin{cases}
\frac{1}{\sqrt{K^X}}\operatorname{sin}\left(\sqrt{K^X}r\right), \hspace{0.2cm}K^X>0 \\[1ex]
r, \hspace{0.2cm}K^X=0 \\[1ex]
\frac{1}{\sqrt{\vert K^X\vert}}\operatorname{sinh}\left(\sqrt{\vert K^X\vert}r\right), \hspace{0.2cm}K^X<0.
\end{cases}
\end{equation*}
Notice that $C\left(r\right)=S{'}\left(r\right)$. In literature focused on Riemannian geometry, the functions $S\left(r\right)$ and $C\left(r\right)$ are often called the model functions \cite{petersen2006riemannian}. 

In order to proceed further, we need the following lemma.

\begin{lemma}\label{lem.7}
\textit{Considering the \(n\)-dimensional real space form \(X\) of constant curvature \(K^X\), let us fix \(p\in P\) and a unit vector \(\xi\in T_pX\). Let \(\alpha\left(r\right)=\operatorname{exp}_p\left(r\xi\right)\) be the unit speed geodesic starting at \(p\) in the direction \(\xi\), where \(0<r\leq a\). Next, let \(E\in T_pX\) satisfy \(E\perp\xi\). Denote by \(E\left(r\right)\) the parallel transport of \(E\) along \(\alpha\). Then the Jacobi field \(J\) along \(\alpha\), subject to the initial conditions \(J\left(0\right)=0,\hspace{0.1cm}  \)\(J^{\prime}\left(0\right)=E\), is given by}
\begin{equation}
J\left(r\right)=S\left(r\right)E\left(r\right),
\end{equation}
satisfying
\begin{equation}
J^{\prime}\left(r\right)=\frac{C\left(r\right)}{S\left(r\right)}J\left(r\right).
\end{equation}
\end{lemma}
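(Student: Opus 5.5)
The plan is to verify directly that the proposed field solves the Jacobi equation and matches the initial data, and then to invoke uniqueness of solutions to linear second-order ODEs along \(\alpha\).

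Recall that a Jacobi field along \(\alpha\) satisfies \(J''+R(J,\alpha')\alpha'=0\). In the space form \(X\) the curvature formula stated earlier gives
\[
R(J,\alpha')\alpha'=K^X\left(\langle\alpha',\alpha'\rangle J-\langle J,\alpha'\rangle\alpha'\right).
\]

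\textbf{Step 1: \(E(r)\) stays orthogonal to \(\alpha'\).} Since \(\alpha\) is a unit speed geodesic, \(\alpha'\) is parallel along \(\alpha\). Parallel transport preserves inner products, so \(\langle E(r),\alpha'(r)\rangle=\langle E,\xi\rangle=0\) for all \(r\).

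\textbf{Step 2: the Jacobi equation reduces to a scalar ODE.} For \(J=fE\) with \(f\) a scalar function, Step 1 gives \(R(J,\alpha')\alpha'=K^X f E\). Because \(E\) is parallel, \(J''=f''E\). The Jacobi equation therefore becomes \((f''+K^Xf)E=0\), which is exactly equation (\ref{eq:45}).

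\textbf{Step 3: match initial data and use uniqueness.} Taking \(f=S\), the conditions \(S(0)=0\) and \(S'(0)=1\) yield \(J(0)=0\) and \(J'(0)=E(0)=E\). The Jacobi equation is a linear second-order ODE along \(\alpha\), so its solution is uniquely determined by \(J(0)\) and \(J'(0)\). Hence \(J(r)=S(r)E(r)\).

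\textbf{Step 4: the derivative identity.} Since \(E\) is parallel, \(J'(r)=S'(r)E(r)=C(r)E(r)\), using the observation \(C=S'\) made earlier. Wherever \(S(r)\neq0\) we may write \(E(r)=J(r)/S(r)\), which gives
\[
J'(r)=\frac{C(r)}{S(r)}J(r).
\]

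The only potential obstacle is ensuring \(S(r)\neq0\) on \((0,a]\). This is automatic for \(K^X\le0\). For \(K^X>0\) it requires \(\sqrt{K^X}a<\pi\), which follows from \(a<\rho_{inj}(X)=\pi/\sqrt{K^X}\), the standing assumption of Proposition \ref{prop.4}.
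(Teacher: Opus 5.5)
Your proof is correct and follows the same route as the paper: write \(J=fE\) with \(E\) parallel, reduce the Jacobi equation to the scalar equation \(f''+K^Xf=0\), identify \(f=S\) from the initial data, and read off \(J'=CE=\frac{C}{S}J\). You are slightly more careful than the paper, which leaves implicit three points you check: that \(E(r)\perp\alpha'\) is preserved, that Jacobi fields are unique given \(J(0)\) and \(J'(0)\), and that \(S\) does not vanish on \((0,a]\), which the quotient \(C/S\) requires.
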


\hspace{-0.6cm}\textbf{Proof of lemma \ref{lem.7}.}
Since \(X\) has a constant sectional curvature \(K^X\), its curvature tensor satisfies
\begin{equation}
R\left(Y,\alpha{'}\right)\alpha^{'}=K^XY,
\end{equation}
whenever \(Y\perp\alpha{'}\). Therefore, every normal Jacobi field \(J\perp\alpha^{\prime}\) satisfies the Jacobi equation
\begin{equation}\label{eq.61}
J^{''}+K^XJ=0.
\end{equation}
Consider the radially dependent solution \(J\left(r\right)=f\left(r\right)E\left(r\right)\), where \(E\left(r\right)\) is parallel along \(\alpha\) and \(f\left(r\right)\) is a smooth radial function. Since \(E{'}\left(r\right)=0\), the Jacobi equation (\ref{eq.61}) reduces to the scalar equation 
\begin{equation}
f^{\prime\prime}\left(r\right)+K^Xf\left(r\right)=0.
\end{equation}
The initial conditions become \(f\left(0\right)=0,\quad f{'}\left(0\right)=1\). By the definition of the model function \(S\), the unique solution is
\begin{equation}
f\left(r\right)=S\left(r\right).
\end{equation}
Hence,
\begin{equation}
J\left(r\right)=S\left(r\right)E\left(r\right).
\end{equation}

This completes the proof of lemma \ref{lem.7}.

\vspace{0.2cm}
We can directly use the above result for the computation of the divergence. Let \(\alpha\left(r\right)=\operatorname{exp}_p\left(r\xi\right)\) be the radial geodesic from \(p\in P\), and let \(\partial_r=\alpha^{\prime}\left(r\right)\) be the outward radial unit vector field. Now, vary the initial direction \(\xi\) by a curve \(\xi\left(t\right)\) on the unit sphere in \(T_pX\). This produces a geodesic variation
\begin{equation}
F\left(r,t\right)=\operatorname{exp}_p\left(r\xi\left(t\right)\right).
\end{equation}
The two natural vector fields along this variation are
\begin{equation}\label{eq.136}
\partial_r=\frac{\partial F}{\partial r},\quad J\left(r\right)=\frac{\partial F}{\partial t}.
\end{equation}
Since the Levi-Civita connection is torsion-free, mixed covariant derivatives commute \(\nabla_{\partial t}\hspace{0.1cm}\partial r=\nabla_{\partial r}\hspace{0.1cm}\partial t\).
From (\ref{eq.136}), we know that \(\partial_t=J\left(r\right)\). This gives us
\begin{equation}
\nabla_{J\left(r\right)}\hspace{0.1cm}\partial r=\nabla_{\partial_r}J\left(r\right).
\end{equation}
But \(\nabla_{J\left(r\right)}\hspace{0.1cm}\partial r\) is precisely the covariant derivative of \(J\) along the radial geodesic, which is denoted by \(J^{\prime}\left(r\right)\). Finally, using the explicit formula for \(J\), we can directly see that 
\begin{equation}
J^{\prime}\left(r\right)=C\left(r\right)E\left(r\right)=\frac{C\left(r\right)}{S\left(r\right)}J\left(r\right).
\end{equation}

\vspace{0.2cm}
Now, we set \(r=a\) in the above formula, with \(V=\partial_r\) being our calibrating vector field, where \(\partial_r\) corresponds to the outward radial unit field of the geodesic sphere of radius \(a\). If \(e_j\) is any tangent vector to such a geodesic sphere, then it can be represented as some value of a Jacobi field \(e_j=J\left(a\right)\). Therefore,
\begin{equation}
\nabla_{e_j}V=\nabla_{J\left(a\right)}\partial_r=\frac{C\left(a\right)}{S\left(a\right)}J\left(a\right)=\frac{C\left(a\right)}{S\left(a\right)}e_j.
\end{equation}
This implies that 
\begin{equation}
\langle\nabla_{e_j}V,e_j\rangle=\frac{C\left(a\right)}{S\left(a\right)},
\end{equation}
for every \(j\in\{1,...,n-1\}\). Taking the trace over an orthonormal basis of the tangent space to the geodesic sphere gives us
\begin{equation}\label{eq.143}
\operatorname{div}\left(V\right)=\left(n-1\right)\frac{C\left(a\right)}{S\left(a\right)},
\end{equation}
almost everywhere in \(T\left(P,a\right)\). 

In relation to our previous considerations, the representation \(x=\exp_p(a\xi)\), with \(\langle \xi,\tau_p\rangle>0\) is unique and depends smoothly on \(x\) inside the curved tube. Hence, the vector field \(V\) defined by the relation (\ref{eq.106}) is a globally well-defined smooth vector field on the interior of \(T\left(P,a\right)\). Therefore, the preceding pointwise computation of \(\operatorname{div}V\) holds everywhere inside the tube. This makes the preceding computation valid in the weak sense, so \(V\) is admissible in the class \(V_{div}\). 

Moreover, in the spherical case \(K^X>0\), the geometric admissibility assumption on the curved tube already implies that \(a<\frac{\pi}{2\sqrt{K^X}}\), as observed in the paragraph right below theorem \ref{thm:1}. Therefore, 
\begin{equation}
\frac{C\left(a\right)}{S\left(a\right)}=\sqrt{K^X}\operatorname{cotg}\left(\sqrt{K^X}a\right)>0,
\end{equation}
and the hemisphere construction remains valid.

Now, substituting (\ref{eq.143}) and \(\vert\vert V\vert\vert_{\infty}\) into the estimate (\ref{eq:43}), we obtain
\begin{equation}
\frac{\vert\partial N\vert}{\vert N\vert}\geq\left(n-1\right)\frac{C\left(a\right)}{S\left(a\right)},
\end{equation}
for every smooth admissible \(N\subset T\left(P,a\right)\). Taking the infimum over all such \(N\) gives us
\begin{equation}
h\left(T\left(P,a\right)\right)\geq\left(n-1\right)\frac{C\left(a\right)}{S\left(a\right)}.
\end{equation}

Finally, substituting the explicit formulas for \(S\) and \(C\), we get
\begin{equation}
      h\left(T\left(P,a\right)\right)\geq\begin{cases}
\vspace{0.25cm}
\left(n-1\right)\sqrt{K^X}\operatorname{cotg}\left(\sqrt{K^X}a\right), & K^X>0 \\
\vspace{0.25cm}  \frac{n-1}{a}, & K^X=0 \\
\left(n-1\right)\sqrt{\vert K^X\vert}\operatorname{cotgh}\left(\sqrt{\vert K^X\vert}a\right), & K^X<0.
    \end{cases} 
\end{equation}

Combining this lower bound with the upper bound obtained in section \ref{sec.3.1} gives us
\begin{equation}
h\left(T\left(P,a\right)\right)=\begin{cases}
\vspace{0.25cm}
\left(n-1\right)\sqrt{K^X}\operatorname{cotg}\left(\sqrt{K^X}a\right), & K^X>0 \\
\vspace{0.25cm}
\frac{n-1}{a}, & K^X=0 \\
\left(n-1\right)\sqrt{\vert K^X\vert}\operatorname{cotgh}\left(\sqrt{\vert K^X\vert}a\right), & K^X<0.
    \end{cases} 
\end{equation}

Moreover, since the upper bound is attained by the full tube \(T\left(P,a\right)\), the tube is self-Cheeger, i.e. \(C_{T\left(P,a\right)}=T\left(P,a\right)\). This completes the proof of theorem \ref{thm:1}.

\newpage
\newtheorem{rem}{Remark}
\begin{rem}

\vspace{0.1cm}
It is worth noting that the lower bound vector field used above is intentionally not the raw field \(\nabla r\), where \(r\left(x\right)=\operatorname{dist}\left(x,P\right)\) with \(x\in T\left(P,a\right)\setminus P\). This is very important, because the normal tubular hypersurface \(\delta P_r\) has a full Jacobian of the form
\begin{equation}
J\left(s,\zeta,r\right)=S\left(r\right)^{n-2}\left(C\left(r\right)-\kappa_{\zeta}\left(s\right)S\left(r\right)\right),
\end{equation}
where \(\kappa_{\zeta}\left(s\right)=\langle\nabla_TT,\zeta\rangle\). Thus, the geometry of \(\delta P_r\) still contains a longitudinal contribution depending on the curvature of the core curve \(P\). This term cancels only after integration over the unit normal sphere, because \(\zeta\mapsto\kappa_{\zeta}\left(s\right)\) is linear and hence has a zero spherical average.

That cancellation is sufficient for the upper-bound computation. However, the lower-bound argument requires a pointwise divergence estimate. The moving-sphere field entirely avoids this difficulty because its divergence is computed from the geodesic sphere of radius \(a\), whose mean curvature in a space form is exactly
\begin{equation}
\left(n-1\right)\frac{C\left(a\right)}{S\left(a\right)}.
\end{equation}

Thus, the proof remains consistent with the upper-bound calculation while avoiding the unnecessary and fragile pointwise computation of the Laplacian of the distance function \(r\).
\end{rem}

\section{Unbounded curved tubes}

\vspace{0.2cm}
At last, it is interesting to explore the case of unbounded/noncompact curved tubes. The compactness assumption on the core curve \(P\) is essential for the self-Cheeger conclusion. In the compact case, the whole tube \(T\left(P,a\right)\) has finite volume and finite boundary area, so it is admissible as a Cheeger competitor. This allows one to test the Cheeger constant on the whole tube and then prove, via the divergence argument, that the tube itself realizes the infimum.

The situation completely changes when \(P\) is noncompact and has infinite length. Hence, suppose that \(P\) is a smooth embedded noncompact curve in \(X^n\). Then we have an embedded unbounded tube, which we can denote by \(T_{\infty}\left(P,a\right)\). It is crucial to notice that the noncompact uniformly tubular setting can occur only for \(K^X\leq 0\). Indeed, when \(K^X>0\), the ambient space \(X^n\) is a sphere, thus, it is automatically compact. If \(\gamma:\mathbb{R}\longrightarrow X^n\) is an arc-length parametrization of a noncompact curve, then the sequence \(x_j=\gamma\left(j\right)\) possesses a convergent subsequence \(x_{j_k}\longrightarrow x_{\infty}\). Hence, this subsequence is Cauchy, so \(d_{X^n}\left(x_{j_k},x_{j_l}\right)\longrightarrow 0\), for \(k,l\longrightarrow\infty\). Consequently, portions of the curve with arbitrarily large intrinsic separation become arbitrarily close in the ambient space. Therefore, no positive uniform tubular radius can exist along the whole curve. In particular, the unbounded tubular setting discussed in this section has no spherical counterpart and shall be considered only for \(K^X\leq 0\).

Let \(\gamma:\mathbb{R}\longrightarrow P\) be an arc-length parametrization of the core curve, and let \(P_L=\gamma\left([-\frac{L}{2},\frac{L}{2}]\right)\) be the compact subarc of length \(L\). Next, let \(T_L\left(P,a\right)\) be the corresponding finite tube over \(P_L\). This finite tube has three boundary 
parts: the lateral tubular boundary and two end caps. The lateral boundary grows linearly with \(L\), while the two end caps have a size that is independent of \(L\). Additionally, notice that the codimension-two corners where the lateral boundary meets the end caps may be smoothed with an arbitrarily small change of volume and boundary area. Hence, these truncations yield admissible smooth competitors after approximation. 

Using the same volume computation as in the compact case, the volume of the finite sub-tube is
\begin{equation}
\vert T_L\left
(P,a\right)\vert=L\omega_{n-2}\int_0^a \left(S\left(r\right)\right)^{n-2}C\left(r\right)\operatorname{dr},
\end{equation}
where \(\omega_{n-2}=\vert\mathbb{S}^{n-2}\left(1\right)\vert\). Since
\begin{equation}
\frac{d}{dr}S\left(r\right)^{n-1}=\left(n-1\right)S\left(r\right)^{n-2}S^{\prime}\left(r\right)=\left(n-1\right)S\left(r\right)^{n-2}C\left(r\right),
\end{equation}
we have
\begin{equation}
\vert T_L\left
(P,a\right)\vert=\frac{L\omega_{n-2}\left(S\left(a\right)\right)^{n-1}}{n-1}.
\end{equation}
Then the lateral boundary area is
\begin{equation}
\vert\partial T_{lat}\left(P,a\right)\vert=L\omega_{n-2}\left(S\left(a\right)\right)^{n-2}C\left(a\right).
\end{equation}
The two end caps contribute with
\begin{equation}
\vert\partial_{cap}  T_L\left(P,a\right)\vert= 2\omega_{n-2}\int_0^a\left(S\left(r\right)\right)^{n-2}\operatorname{dr}.
\end{equation}
Therefore, the full boundary area of \(T_L\left(P,a\right)\) is given by
\begin{equation}
\vert\partial T_L\left(P,a\right)\vert=L\omega_{n-2}\left(S\left(a\right)\right)^{n-2}C\left(a\right)+2\omega_{n-2}\int_0^a\left
(S\left(r\right)\right)^{n-2}\operatorname{dr}.
\end{equation}
Dividing by the volume gives us
\begin{equation}
\frac{\vert\partial T_L\left(P,a\right)\vert}{\vert T_L\left(P,a\right)\vert}=\left(n-1\right)\frac{C\left(a\right)}{S\left(a\right)}+\frac{2\omega_{n-2}\int_0^a\left(S\left(r\right)\right)^{n-2}\operatorname{dr}}{\frac{L\omega_{n-2}\left(S\left(a\right)\right)^{n-1}}{n-1}}
\end{equation}

The second term is clearly positive for every finite \(L\) and it converges to zero as \(L\) goes to infinity. Hence,
\begin{equation}
\lim_{L\longrightarrow\infty}\frac{\vert\partial T_L\left(P,a\right)\vert}{\vert T_L\left(P,a\right)\vert}=\left(n-1\right)\frac{C\left(a\right)}{S\left(a\right)}.
\end{equation}
Consequently, the Cheeger constant of the unbounded tube satisfies the upper estimate
\begin{equation}\label{eq.158}
h\left(T_{\infty}\left(P,a\right)\right)\leq\left(n-1\right)\frac{C\left(a\right)}{S\left(a\right)}.
\end{equation}

Now comes the more delicate part regarding the lower bound of \(h\left(T_{\infty}\left(P,a\right)\right)\). The main issue is that the global moving-sphere parametrization does not automatically follow from the simple embeddedness of the unbounded curved tube \(T_{\infty}\left(P,a\right)\). In particular, the inequality \(a<\rho_{tub}\left(P\right)\) gives the uniqueness of the nearest-point/Fermi representation \(x=\operatorname{exp}_p\left(r\zeta\right)\), with \(0<r<a\), \(\zeta\in \nu_pP\) and \(\vert\zeta
\vert=1\). However, the moving-sphere representation is different: \(x=\operatorname{exp}_q\left(a\xi\right)\), \(\langle\xi, \tau_q\rangle>0\), with \(\vert\xi\vert=1\) and \(q\in P\). Here, the center \(q\in P\) is not the nearest point to \(x\). It is a point on the curve \(P\) such that \(x\) lies on the geodesic sphere of fixed radius \(a\) centered at \(q\). Looking back at the proof of proposition \ref{prop.4}, we have used the fact that the sublevel set \(A_x=\{s\in\mathbb{R}\hspace{0.1cm}\vert\hspace{0.1cm}d\left(x,\gamma\left(s\right)\right)<a\}\) is a connected interval on the oriented compact curve. This implies that it has exactly one entering boundary point. That provides the uniqueness of the positively oriented representation. For a noncompact \(P\), we can no longer control the global structure of \(A_x\) through compactness. However, properness of the embedding of \(P\) in \(X^n\), together with a positive uniform tubular radius, provides exactly the missing global control. Under these assumptions, the argument of proposition \ref{prop.4} naturally extends to the noncompact setting.

This leads us to the following proposition.

\begin{prop}\label{prop.5}
Let \(X^n\) be a real space form with constant sectional curvature \(K^X\leq 0\) and let \(T_{\infty}\left(P,a\right)=\{x\in X\hspace{0.1cm}\vert\hspace{0.1cm}\operatorname{dist}\left(x,P\right)\leq a\}\) be the unbounded curved tube of radius \(a>0\) around a smooth properly embedded noncompact curve \(P\). Assume that \(P\) possesses a positive uniform tubular radius \(\rho_{tub}\left(P\right)\). Specifically, suppose that there exist \(0<a<\rho\) such that the normal exponential map
\begin{equation}
\exp^{\perp}:\{\left
(s,v\right)\hspace{0.1cm}\vert\hspace{0.1cm}s\in\mathbb{R},\hspace{0.1cm}v\in\nu_{\gamma\left(s\right)}P,\hspace{0.1cm}\vert v\vert<\rho\}\longrightarrow\operatorname{Int}\left(T_{\infty}\left(P,\rho\right)\right)
\end{equation}
is a smooth diffeomorphism. Then the moving-sphere parametrization 
\begin{equation}
\Psi_a:\mathcal{H}_+\longrightarrow\operatorname{Int}\left(T_{\infty}\left(P,a\right)\right), \quad \Psi_a\left(s,\xi\right)=\operatorname{exp}_{\gamma\left(s\right)}\left(a\xi\right),
\end{equation}
where \(\mathcal{H}_+:=\{\left(s,\xi\right)\hspace{0.1cm}\vert\hspace{0.1cm} s\in\mathbb{R},\xi\in T_{\gamma\left(s\right)}X,\vert\xi\vert=1,\langle\xi, T\left(s\right)\rangle>0\}\),
is a smooth diffeomorphism. 
\end{prop}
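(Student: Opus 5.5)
The plan is to reproduce the proof of proposition \ref{prop.4} step by step, and to replace the two places where compactness of \(P\) was used. The first is that the sublevel set
\[
A_x=\{s\in\mathbb{R}\mid d(x,\gamma(s))<a\}
\]
is bounded. The second is that \(A_x\) is a single open interval. Two simplifications come from \(K^X\le 0\). By Cartan--Hadamard, \(\exp_q\) is a global diffeomorphism for every \(q\), so \(\rho_{inj}(X)=\infty\). Moreover \(F_x(s)=d(x,\gamma(s))^2\) is smooth on all of \(\mathbb{R}\), for every \(x\). The first variation formula \(F_x'(s)=-2\langle \exp_{\gamma(s)}^{-1}x,\,T(s)\rangle\) then holds globally. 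In particular, \(F_x'(s)=0\) exactly when \(\exp^{-1}_{\gamma(s)}x\in\nu_{\gamma(s)}P\), that is, when \(s\) yields a normal representation of \(x\) of length \(\sqrt{F_x(s)}\).

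\textbf{Step 1 (image lies in the open tube).} This is verbatim the first step of proposition \ref{prop.4}. For \(\langle\xi,T(s)\rangle>0\) we have \(F_x'(s)<0\) at \(x=\exp_{\gamma(s)}(a\xi)\), so \(\operatorname{dist}(x,P)<a\).

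\textbf{Step 2 (structure of \(A_x\)).} Fix \(x\in\operatorname{Int}(T_\infty(P,a))\) and let \(s_0\) be its unique Fermi foot point, with \(F_x(s_0)=r^2<a^2\). Properness of the embedding gives \(d(x,\gamma(s))\to\infty\) as \(|s|\to\infty\). Hence every sublevel set \(\{F_x<c\}\) is bounded, and each of its connected components is a bounded open interval.

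The key claim is that \(B_x:=\{F_x<\rho^2\}\) is connected. Each component of \(B_x\) is a bounded interval on whose endpoints \(F_x=\rho^2\). So \(F_x\) attains an interior minimum there, which is a critical point. That critical point yields a normal vector of length \(<\rho\) mapped to \(x\) by \(\exp^\perp\). Two components would produce two distinct such normal vectors, contradicting injectivity of \(\exp^\perp\) on the \(\rho\)-disk bundle.

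Next, \(A_x\subset B_x\) is connected. Suppose it had two components. Then between them lies a closed gap interval, contained in \(B_x\), on which \(F_x\ge a^2\) and with \(F_x=a^2\) at its endpoints. Either \(F_x\) has an interior maximum on the gap with value in \([a^2,\rho^2)\), or \(F_x\equiv a^2\) on the gap. In both cases there is a critical point \(s_*\neq s_0\) yielding a normal representation of \(x\) of length in \([a,\rho)\), which again contradicts injectivity. Therefore \(A_x=(s_-,s_+)\) is a single bounded open interval.

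I expect Step 2 to be the main obstacle. It is exactly where the uniform radius \(\rho>a\), rather than merely \(a<\rho_{tub}\), is needed, so that the ``gap maximum'' lands inside the injectivity region.

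\textbf{Step 3 (transversality, existence, uniqueness, smooth inverse).} These are copied from proposition \ref{prop.4}. If \(F_x'(s_\pm)=0\), then \(s_\pm\) would give a normal representation of length \(a<\rho\) distinct from \(s_0\), contradicting injectivity. Hence \(F_x'(s_-)<0<F_x'(s_+)\), and \(s_-\) provides a positively oriented representation \(x=\exp_{\gamma(s_-)}(a\xi)\). Any other positively oriented representation \(x=\exp_{\gamma(\sigma)}(a\widetilde{\xi})\) has \(F_x(\sigma)=a^2\) and \(F_x'(\sigma)<0\), so \(\sigma\) is an entering endpoint of the interval \(A_x\). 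Thus \(\sigma=s_-\), and \(\widetilde{\xi}=\xi\) by global injectivity of \(\exp_{\gamma(s_-)}\). For the inverse, the implicit function theorem applied to \(F_x(s)-a^2=0\), using \(F_x'(s(x))\neq0\), gives local smooth dependence of \(s(x)\) on \(x\). Global uniqueness patches these local solutions together. Then \(\xi(x)=a^{-1}\exp^{-1}_{\gamma(s(x))}x\) is smooth, and \(\Psi_a\), being a restriction of the exponential map, is a smooth bijection with smooth inverse.
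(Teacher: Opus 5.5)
Your proposal is correct and follows the paper's proof in outline. Both arguments get boundedness of $A_x$ from properness and connectedness of $A_x$ from critical points of $F_x$ that yield normal representations of $x$ contradicting injectivity of $\exp^{\perp}$. Both then establish transversality at $s_\pm$, take the entering endpoint as the unique positive center, and obtain a smooth inverse from the implicit function theorem.

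The only real difference is the connectedness step. The paper argues directly on $A_x$. Every component of the bounded open set $A_x$ contains an interior minimum of $F_x$, hence a normal vector of length $<a$ mapped to $x$, and two components give two such vectors over distinct base points. You run exactly this argument at level $\rho^2$ to show $B_x$ is connected, and then descend to $A_x$ by a gap-maximum argument. That is a detour. Applied directly to $A_x$, the minimum argument is shorter and uses injectivity only on the $a$-disk bundle. So, contrary to your remark, $a<\rho$ is not needed for connectedness. It is needed only for transversality at $s_\pm$, which involves a normal representation of length exactly $a$.

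Your gap step also tacitly takes two \emph{adjacent} components, so that $F_x\ge a^2$ on the gap. Since $F_x$ is merely smooth, $A_x$ could a priori have infinitely many accumulating components and no adjacent pair. This is easy to patch: if $\max F_x=a^2$ on the gap $[\beta_1,\alpha_2]$, then $\beta_1$ is a local maximum of $F_x$ and hence critical. Using the minimum argument directly avoids the issue altogether.

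On the other hand, your explicit appeal to Cartan--Hadamard is a cleaner justification than the paper's reference to ``the preceding radius assumptions.'' It makes $F_x$ smooth on all of $\mathbb{R}$ and the first variation formula valid globally.
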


\newpage
\hspace{-0.55cm}\textbf{Proof of proposition \ref{prop.5}.} 

The argument remains identical to that in proposition \ref{prop.4}. Nonetheless, if we want to be precise, it is appropriate to verify that the relevant sublevel set remains a single bounded interval. 

Let us fix \(x\in\operatorname{Int}\left(T_{\infty}\left(P,a\right)\right)\) and define \(F_x\left(s\right):=d\left(x,\gamma\left(s\right)\right)^2\), together with the set \(A_x=\{s\in\mathbb{R}\hspace{0.1cm}\vert\hspace{0.1cm}F_x\left(s\right)<a^2\}\). Since \(x\in\operatorname{Int}\left
(T_{\infty}\left(P,a\right)\right)\), the set \(A_x\) is evidently nonempty. Furthermore, because \(X^n\) is complete, the closed ball \(\overline{B}_a\left(x\right)\) is compact. The arc-length parametrization \(\gamma:\mathbb{R}\longrightarrow X^n\) is proper, so \(\gamma^{-1}\left(\overline{B}_a\left(x\right)\right)\) is compact in \(\mathbb{R}\). This implies that \(A_x\) is also bounded.

Now, presume that \(A_x\) possesses two different connected components \(I_1\) and \(I_2\). On each component, the previously defined continuous function \(F_x\) satisfies \(F_x<a^2\). In particular, at the boundary of the component, we get \(F_x=a^2\). Provided \(I_j\) are bounded for \(j\in\{1,2\}\), the function \(F_x\) attains an interior minimum at some \(s_j\in I_j\). At this point, we have \(F_x^{\prime}\left(s_j\right)=0\). Since \(d\left(x,\gamma\left(s_j\right)\right)<a\), the minimizing geodesic from \(\gamma\left(s_j\right)\) to \(x\) is unique by the preceding radius assumptions. Hence, the first variation formula gives us
\begin{equation}
F_x^{\prime}\left(s_j\right)=-2\langle \operatorname{exp}^{-1}_{\gamma\left(s_j\right)}\left(x\right), T\left(s_j\right)\rangle=0,
\end{equation}
where \(T\left(s_j\right):=\dot{\gamma}\left(s_j\right)\) is the positively oriented unit tangent vector at \(s_j\). Consequently,
\begin{equation}
v_j:=\operatorname{exp}^{-1}_{\gamma\left(s_j\right)}\left(x\right)\perp T_{\gamma\left(s_j\right)}P\implies v_j\in\nu_{\gamma\left(s_j\right)} P,\quad \vert v_j\vert<a,
\end{equation}
so
\begin{equation}
x=\operatorname{exp}_{\gamma\left(s_j\right)}\left(v_j\right).
\end{equation}
This implies that the two distinct components \(I_1,I_2\) provide two different normal representations
\begin{equation}
x=\operatorname{exp}_{\gamma\left(s_1\right)}\left(v_1\right)=\operatorname{exp}_{\gamma\left(s_2\right)}\left(v_2\right),\quad s_1\neq s_2,
\end{equation}
contradicting the assumed injectivity of the normal exponential map. 

Therefore, \(A_x\) has at most one connected component, and since it is nonempty, \(A_x\) is a single open interval. This is precisely the mechanism already presented in proposition \ref{prop.4}. Here we justify this point rigorously for noncompact \(P\).

Finally, we shall make a short comment on the transversality property of \(A_x\). Here, \(A_x=\left(s_-\left(x\right),s_+\left(x\right)\right)\), where \(s_-\left(x\right),s_+\left
(x\right)\) are unique endpoints such that \(s_-\left(x\right)<s_+\left
(x\right)\). 

Exactly as in proposition \ref{prop.4}, the boundary intersections remain transverse. Indeed, if \(F^{\prime}_x\left(s_*\right)=0\) at some \(s_*\in\{s_-\left
(x\right),s_+\left(x\right)\}\), then the first variation formula implies that the minimizing geodesic of length \(a\) from \(\gamma\left(s_*\right)\) to \(x\) is normal to \(P\). Hence \(x=\operatorname{exp}^{\perp}_{\gamma\left(s_*\right)}\left(a\xi_*\right)\), for some unit normal vector \(\xi_*\). However, \(x\in\operatorname{Int}\left(T_{\infty}\left(P,a\right)\right)\) also has its unique Fermi representation \(x=\operatorname{exp}^{\perp}_{\gamma\left(s_0\right)}\left(v_0\right)\), \(\vert v_0\vert<a\), with \(s_0\in A_x\) whereas \(s_*\in\partial A_x\). Since \(a<\rho\), both representations lie in the injectivity domain of \(\operatorname{exp}^{\perp}\), yielding a contradiction. Therefore, 
\begin{equation}
F^{\prime}_x\left(s_{\pm}\left(x\right)\right)\neq 0.
\end{equation}
Since \(s_-\left(x\right), s_+\left(x\right)\) are respectively the entering and exiting endpoints of \(A_x\), we have
\begin{equation}
F^{\prime}_x\left(s_-\left(x\right)\right)<0,\quad F^{\prime}_x\left(s_+\left(x\right)\right)>0.
\end{equation}

In summary, the unique entering endpoint of \(A_x\) determines the unique positively oriented moving-sphere representation of \(x\), while the smooth dependence on \(x\) follows from the implicit function theorem exactly as in the compact case mentioned above. Therefore, the moving-sphere parametrization is again a smooth diffeomorphism even in the noncompact case.

This completes the proof of proposition \ref{prop.5}.

\vspace{0.2cm}
Using the diffeomorphism of \(\Psi_a\), every point \(x\in \operatorname{Int}\left(T_{\infty}\left(P,a\right)\right)\) has a unique representation
\begin{equation}
x=\operatorname{exp}_{\gamma\left(s_-\left(x\right)\right)}\left(a\xi_-\left(x\right)\right), \quad \vert\xi_-\left(x\right)\vert=1, \quad \langle\xi_-\left(x\right), T\left(s_-\left(x\right)\right)\rangle>0.
\end{equation}
Let us define the following vector field
\begin{equation}
V\left(x\right)=\restr{\frac{d}{dr}}{r=a}\operatorname{exp}_{\gamma\left(s_-\left(x\right)\right)}\left(r\xi_-\left(x\right)\right).
\end{equation}
Equivalently, \(V\left(x\right)\) is the outward radial unit vector evaluated at \(x\) of the geodesic sphere of radius \(a\) centered at \(\gamma\left(s_-\left(x\right)\right)\). Because \(s_-\left(x\right)\) and \(\xi_-\left(x\right)\) are smooth, the vector field \(V\) is also smooth on \(\operatorname{Int}\left(T_{\infty}\left(P,a\right)\right)\).

One can see that this is exactly the type of global vector field used in the compact lower-bound argument. Therefore, the divergence computation remains identical, and we would eventually get
\begin{equation}\label{eq.194}
\operatorname{div}\left(V\right)=\left(n-1\right)\frac{C\left(a\right)}{S\left(a\right)}.
\end{equation}

Now, let \(N\subset T_{\infty}\left(P,a\right)\) be a smooth admissible competitor with compact closure and positive finite volume \(\vert N\vert\). Let us also denote the divergence (\ref{eq.194}) as \(\operatorname{div}\left(V\right)=c\). Then, using the divergence approach (\ref{eq:43}), we obtain 
\begin{equation}
\frac{\vert\partial N\vert}{\vert N\vert}\geq c.
\end{equation}
Taking the infimum over all admissible \(N\), we get
\begin{equation}\label{eq.196}
h\left(T_{\infty}\left(P,a\right)\right)\geq\left(n-1\right)\frac{C\left(a\right)}{S\left(a\right)}.
\end{equation}

The combination of the upper bound (\ref{eq.158}) and the lower bound (\ref{eq.196}) yields
\begin{equation}\label{eq.197}
h\left(T_{\infty}\left(P,a\right)\right)=\left(n-1\right)\frac{C\left(a\right)}{S\left(a\right)}.
\end{equation}

We proved that the compact upper/lower-bound arguments can be extended to the unbounded setup, providing the equality (\ref{eq.197}). First of all, the exhaustion is given by longer and longer finite sub-tubes \(T_L\left(P,a\right)\). The reason is simple, both volume and lateral boundary area grow linearly in the variable \(L\), while the two end caps remain uniformly bounded. Hence, the contribution of the end caps disappears in the limit \(L\longrightarrow\infty\). Considering the lower-bound, the global validity of the calibration vector field is not automatic. We had to provide proposition \ref{prop.5}, in order to justify that the moving-sphere parametrization is globally well-defined on \(T_{\infty}\left(P,a\right)\) and that the associated calibration vector field can be applied to every compact admissible competitor. With such a global assumption, we managed to extend the compact lower-bound argument to the unbounded case.  

Although the Cheeger constant of the unbounded curved tube coincides with the compact case under the global moving-sphere calibration assumption (see proposition \ref{prop.5}), this value is not attained by any finite-volume admissible set. For instance, let \(E \subset T_{\infty}\left(P,a\right)\) be a smooth admissible competitor with compact closure and positive volume. Then by the divergence theorem, we have
\begin{equation}
\int_E\operatorname{div}\left(V\right)=\int_{\partial E}\langle V,n_E\rangle,
\end{equation}
where \(n_E\) is the outward unit normal to \(\partial E\). From (\ref{eq.194}), we know that the divergence is constant, i.e.  \(\operatorname{div}\left(V\right)=c\), so
\begin{equation}
c\vert E\vert=\int_{\partial E}\langle V,n_E\rangle.
\end{equation}
Since \(\vert V\vert=1\) and \(\vert n_E\vert=1\), we have \(\langle V, n_E\rangle\leq 1\) (pointwise). Therefore,
\begin{equation}
c\vert E\vert=\int_{\partial E}\langle V,n_E\rangle\leq\vert\partial E\vert\implies\frac{\vert\partial E\vert}{\vert E\vert}\geq c.
\end{equation}
Now, suppose that \(E\) is a Cheeger set. Then
\begin{equation}
\frac{\vert\partial E\vert}{\vert E\vert}=h\left(T_{\infty}\left(P,a\right)\right)=c.
\end{equation}
Thus
\begin{equation}
\vert\partial E\vert=c\vert E\vert.
\end{equation}
But from the calibration inequality, we also have
\begin{equation}
c\vert E\vert=\int_{\partial E}\langle V, n_E\rangle\leq\vert\partial E\vert=c\vert E\vert.
\end{equation}

Therefore, this equality must hold when \(\langle V,n_E\rangle\leq 1\). Since both \(V\) and \(n_E\) are unit vectors, we obtain an equality
\begin{equation}
\langle V, n_E\rangle=1\implies
n_E=V
\end{equation}
on \(\partial E\). This points to the fact that any finite-volume Cheeger set must satisfy this very rigid boundary condition: the outward unit normal of \(\partial E\) must equal the calibration vector field \(V\). 

Now, we shall explore the consequences of this restrictive condition. Let 
\begin{equation}
\Psi^{-1}_a\left(x\right)=\left(\sigma\left(x\right),\xi\left(x\right)\right)
\end{equation}
be the global moving-sphere inverse. Thus
\begin{equation}
x=\operatorname{exp}_{\gamma\left(\sigma\left(x\right)\right)}\left(a\xi\left(x\right)\right), \quad \vert\xi\left(x\right)\vert=1,\quad\langle\xi\left(x\right),T\left(\sigma\left(x\right)\right)\rangle>0.
\end{equation}
Here, the function 
\begin{equation}
\sigma:\operatorname{Int}\left(T_{\infty}\left(P,a\right)\right)\longrightarrow\mathbb{R}
\end{equation}
records the parameter value of the positively oriented moving-sphere center. 

Now, we shall compute \(d\sigma\left(V\right)\) and focus on its sign. Let us define
\begin{equation}
G\left(s,x\right):=d\left(x,\gamma\left(s\right)\right)^2-a^2.
\end{equation}
By definition of \(\sigma\left(x\right)\), we have
\begin{equation}
G\left(\sigma\left(x\right),x\right)=0.
\end{equation}
Differentiating this identity in the direction of the calibration vector field \(V\), we get
\begin{equation}\label{eq.210}
\partial_sG\left(\sigma\left(x\right),x\right)d\sigma\left(V\right)+d_xG\left(\sigma\left(x\right),x\right)[V]=0.
\end{equation}
Here, we compute the two terms. The first one is obtained by using the first variational formula,
\begin{equation}
\partial_sG\left(\sigma\left(x\right),x\right)=-2a\langle\xi\left(x\right),T\left(\sigma\left(x\right)\right)\rangle.
\end{equation}
Considering the second term, since \(x\) lies at a distance \(a\) from \(\gamma\left(\sigma\left(x\right)\right)\) in the direction \(\xi\left(x\right)\), the \(x\)-gradient of \(d\left(x,\gamma\left(\sigma\left(x\right)\right)\right)^2\) is \(2aV\). Therefore,
\begin{equation}
d_xG\left(\sigma\left(x\right),x\right)[V]=\langle 2aV,V\rangle=2a.
\end{equation}
Substituting this back into the differentiated identity (\ref{eq.210}) gives
\begin{equation}
-2a\langle\xi\left(x\right),T\left(\sigma\left(x\right)\right)\rangle d\sigma\left(V\right)+2a=0.
\end{equation}
Hence
\begin{equation}\label{eq.215}
d\sigma\left(V\right)=\frac{1}{\langle\xi\left(x\right),T\left(\sigma\left(x\right)\right)\rangle}.
\end{equation}
Since the moving-sphere parametrization is positively oriented, the denominator of (\ref{eq.215}) is strictly positive. 

Therefore,
\begin{equation}
d\sigma\left(V\right)>0
\end{equation}
everywhere in \(\operatorname{Int}\left(T_{\infty}\left(P,a\right)\right)\).
This shows that the calibration vector field \(V\) strictly increases the center coordinate \(\sigma\).

\newpage
In summary, the equality in the calibration argument would force the outward normal of the boundary of a minimizer to coincide with the vector field \(V\). Since the associated center coordinate \(\sigma\) satisfies \(d\sigma\left(V\right)>0\), no relatively compact smooth admissible set can satisfy this condition. Since \(\overline{E}\) is compact, \(\sigma\) attains its minimum on \(\overline{E}\). The inequality \(d\sigma\left(V\right)>0\) excludes an interior minimum. Thus, a minimum is attained at some point on the boundary \(\partial E\). At such a boundary minimum, the outward normal derivative satisfies \(d\sigma\left(n_E\right)\leq 0\). However, equality in the calibration requires \(n_E=V\), while \(d\sigma\left(V\right)>0\), so we obtain a contradiction. 

Hence, the infimum is approached by an exhaustion of the unbounded tube by longer and longer finite sub-tubes, but no finite-volume admissible set attains it, i.e., there is no Cheeger set in the admissible class. This actually implies that the Cheeger constant \(h\left(T_{\infty}\left(P,a\right)\right)\) exists as a finite number but is not attained by a finite-volume set. In particular, every finite truncation (sub-tube) \(T_L\left(P,a\right)\) has an extra positive boundary contribution coming from its two end caps. Thus, no finite truncation realizes the limiting quotient exactly. Instead, the minimizing sequence is given by longer and longer sub-tubes \(T_L\left(P,a\right)\), and this sequence escapes to infinity along the underlying curve \(P\). Therefore, the formal limiting object is the whole unbounded tube \(T_{\infty}\left(P,a\right)\).

\newpage
\section*{Acknowledgements}
I would like to acknowledge several people for their valuable advice, help, and guidance. First, I would like to thank Prof. David Krejčiřík, DSc., who introduced me to this topic, supervised our joint paper \cite{krejcirik2019cheeger}, and originally formulated the conjecture corresponding to theorem \ref{thm:1} in dimension \(n=3\). I would also like to thank Prof. Jan Slovák, DrSc., for his considerable assistance and valuable advice during the preparation and completion of this paper.

Besides, I would like to acknowledge the financial support received from several sources. This research was supported by the Czech Science Foundation project Cartan Supergeometries and Higher Cartan Geometries, No. 24-10887S. Further support was provided by the European Union through Horizon Europe project No. 101086123, which enabled a research stay at UC Davis. Additionally, there was also a local support from the project for Specific research in Mathematics MUNI/A/1457/2023 for doctoral students, provided by the Department of Mathematics and Statistics at Masaryk University.  

\vspace{0.3cm}
\section*{Data availability}
No data were used for the research described in this article.

\section*{Declaration of competing interest}
The author declares no competing interests.

\bibliographystyle{elsarticle-num}

\bibliography{cas-refs}

@book{maggi2012sets,
  title={Sets of finite perimeter and geometric variational problems: an introduction to Geometric Measure Theory},
  author={Maggi, Francesco},
  number={135},
  year={2012},
  publisher={Cambridge University Press}
}

@incollection{leonardi2015overview,
  author    = {Leonardi, Gian Paolo},
  title     = {An overview on the Cheeger problem},
  booktitle = {New Trends in Shape Optimization},
  series    = {International Series of Numerical Mathematics},
  volume    = {166},
  pages     = {117--139},
  publisher = {Birkh{\"a}user/Springer},
  year      = {2015}
}

@inproceedings{singmaster1978constrained,
  title={A constrained isoperimetric problem},
  author={Singmaster, David and Souppouris, DJ},
  booktitle={Mathematical Proceedings of the Cambridge Philosophical Society},
  volume={83},
  number={1},
  pages={73--82},
  year={1978},
  organization={Cambridge University Press}
}

@incollection{cheeger1970lower,
  author    = {Cheeger, Jeff},
  title     = {A lower bound for the smallest eigenvalue of the Laplacian},
  booktitle = {Problems in Analysis: A Symposium in Honor of Salomon Bochner},
  editor    = {Gunning, Robert C.},
  pages     = {195--199},
  publisher = {Princeton University Press},
  year      = {1970}
}

@article{parini2011introduction,
  title={An introduction to the Cheeger problem},
  author={Parini, Enea},
  journal={Surv. Math. Appl.},
  volume={6},
  pages={9--21},
  year={2011}
}

@article{kawohl2003isoperimetric,
  title={Isoperimetric estimates for the first eigenvalue of the $ p $-Laplace operator and the Cheeger constant},
  author={Kawohl, Bernd and Fridman, Vladislav},
  journal={Commentationes Mathematicae Universitatis Carolinae},
  volume={44},
  number={4},
  pages={659--667},
  year={2003},
  publisher={Charles University in Prague, Faculty of Mathematics and Physics}
}

@article{kawohl2006characterization,
  title={Characterization of Cheeger sets for convex subsets of the plane},
  author={Kawohl, Bernd and Lachand-Robert, Thomas},
  journal={Pacific journal of mathematics},
  volume={225},
  number={1},
  pages={103--118},
  year={2006},
  publisher={Mathematical Sciences Publishers}
}

@article{canete2022cheeger,
  title={Cheeger sets for rotationally symmetric planar convex bodies},
  author={Canete, Antonio},
  journal={Results in Mathematics},
  volume={77},
  number={1},
  pages={9},
  year={2022},
  publisher={Springer}
}

@article{krejcirik2011cheeger,
  author  = {Krej{\v{c}}i{\v{r}}{\'\i}k, David and Pratelli, Aldo},
  title   = {The Cheeger constant of curved strips},
  journal = {Pac. J. Math.},
  volume  = {254},
  number  = {2},
  pages   = {309--333},
  year    = {2011},
  doi     = {10.2140/pjm.2011.254.309}
}

@article{krejcirik2019cheeger,
  author  = {Krej{\v{c}}i{\v{r}}{\'\i}k, David
             and Leonardi, Gian Paolo
             and Vlachopulos, Petr},
  title   = {The Cheeger constant of curved tubes},
  journal = {Arch. Math.},
  volume  = {112},
  pages   = {429--436},
  year    = {2019},
  doi     = {10.1007/s00013-018-1282-x}
}

@book{gray2003tubes,
  title={Tubes},
  author={Gray, Alfred},
  volume={221},
  year={2003},
  publisher={Springer Science \& Business Media}
}

@book{evans2018measure,
  title={Measure theory and fine properties of functions},
  author={Evans, LawrenceCraig},
  year={2018},
  publisher={Routledge}
}

@book{petersen2006riemannian,
  title={Riemannian geometry},
  author={Petersen, Peter},
  volume={171},
  year={2006},
  publisher={Springer}
}


\end{document}